\documentclass[12pt, reqno, twoside]{amsart}
\usepackage{amsmath, amsthm, mathrsfs, amscd, amsfonts, amssymb, graphicx, color}
\usepackage[bookmarksnumbered, colorlinks, plainpages]{hyperref}
\hypersetup{colorlinks=true,linkcolor=red, anchorcolor=green, citecolor=cyan, urlcolor=red, filecolor=magenta, pdftoolbar=true}
\usepackage{enumitem}
\usepackage{enumitem}
\newtheorem{theorem}{Theorem}[section]

\theoremstyle{definition}

\theoremstyle{remark}
\newtheorem{remark}[theorem]{Remark}
\numberwithin{equation}{section}

\begin{document}

\setcounter{page}{1}

\title[Nonlinear Diffusion, Geometric and Functional Inequalities]{Nonlinear Diffusion, and Geometric and Functional Inequalities on Smooth Metric Measure spaces}
\author[A. Taheri]{Ali Taheri}
 \address{School of Mathematical and  Physical Sciences, 
University of Sussex, Falmer, Brighton, United Kingdom.}
\email{\textcolor[rgb]{0.00,0.00,0.84}{A.Taheri@sussex.ac.uk}}


\keywords{Functional Inequalities, Isoperimetric Inequalities, Smooth metric measure spaces, Nonlinear Diffusion, Entropy}


\begin{abstract} 
This extended abstract is based on a talk given at the workshop and summer school ``Direct and Inverse Problems with 
Applications" in Ghent Analysis and PDE Centre in August 2024. It focuses on nonlinear diffusion equations of slow and 
fast types and their links with some geometric and functional inequalities in the framework of smooth metric measure 
spaces. The article presents some introduction in a summer school style as well as several new results.  
\end{abstract}
\maketitle

\section{Introduction and Preliminaries}
\setcounter{equation}{0}
\label{section ross}

Inequalities between some notion of ``{\it size}" of, sets and their perimeters, functions and their gradients, or more generally, 
tensor fields and their differentials, covariant or Lie derivatives, play an important role in geometric analysis. I will give an 
introduction to certain aspects of the subject (without proofs) and describe some new research directions emerging from 
the need to extend known inequalities from Euclidean to the framework of (smooth) metric measure spaces 
({\it see}, e.g., \cite{AGS, AMS, Bak, Otto, Ruz-Sur, Taheri-book-one, Taheri-book-two, VC, Wang, Zhang}).

\section{Isoperimetric inequality and the Isoperimetric profile}

Recall that the classical isoperimetric inequality in $\mathbb R^n$ ($n \ge 2$) asserts that for any sufficiently smooth 
compact subdomain $\Omega \subset \mathbb R^n$:  
\begin{equation}
n \omega_n^{1/n} [{\rm Vol}(\Omega)]^{1-1/n} \le {\rm Per}(\Omega).
\end{equation} 
Here ${\rm Vol}(\Omega)={\mathscr L}^n(\Omega)$ denotes the volume of $\Omega$, 
${\rm Per}(\Omega)={\mathscr H}^{n-1}(\partial \Omega)$ denotes the perimeter of $\Omega$ and $\omega_n$ 
is the volume of the unit $n$-ball. This inequality can equivalently be written (with ${\mathbb B}={\mathbb B}_n$ 
denoting the unit $n$-ball) in the more suggestive form 
\begin{equation} \label{isop-Rn-equation}
\frac{{\rm Per}(\Omega)}{[{\rm Vol}(\Omega)]^\frac{n-1}{n}} \ge \frac{{\rm Per}({\mathbb B})}{[{\rm Vol}({\mathbb B})]^\frac{n-1}{n}}
= \frac{n \omega_n}{\omega_n^\frac{n-1}{n}} = n \omega_n^\frac{1}{n}.
\end{equation}

The inequality asserts that for fixed volume, balls {\it minimise} perimeter, or for fixed perimeter, balls {\it maximise} 
volume. There are various ways to prove this inequality, but the most common and quickest one is via 
the Brunn-Minkowski inequality: 
\begin{equation}
[{\rm Vol}(A+B)]^\frac{1}{n} \ge [{\rm Vol}(A)]^\frac{1}{n} + [{\rm Vol}(B)]^\frac{1}{n}, 
\end{equation}
where $A, B \subset \mathbb R^n$ are non-empty compact sets and $A+B=\{a+b \in \mathbb R^n : a \in A, b \in B\}$. 
Another way of writing \eqref{isop-Rn-equation} is to introduce for $v \ge 0$ the isoperimetric profile function 
$I(v) = \inf \{ {\rm Per}(\Omega) : {\rm Vol}(\Omega)=v\}$. Then ${\rm Per}(\Omega) \ge I({\rm Vol}(\Omega))$ 
where $I(v)= n \omega_n^{1/n} v^{1-1/n}$. Moreover, for any $n$-ball ${\mathcal B}$ we have 
$I({\rm Vol}({\mathcal B})) = {\rm Per}({\mathcal B})$.

\qquad \\
{\bf The Levy-Gromov isoperimetric inequality.} A first natural question here is if the isoperimetric inequality has 
a counterpart for Riemannian manifolds, and if so, what is it? Starting with the cases of the round unit 
$n$-sphere ${\mathbb S}^n$, and the real hyperbolic $n$-space ${\mathbb H}^n$ (i.e., model spaces of constant 
sectional curvatures $\pm1$) we have the following, which are in spirit quite close to (and complement) what was 
observed above for ${\mathbb R}^n$: 
\begin{itemize}
\item Given $\Omega \subset {\mathbb S}^n$, if ${\mathcal B} \subset {\mathbb S}^n$ is a geodesic ball 
(i.e., a spherical cap) such that ${\rm Vol}(\Omega)={\rm Vol}({\mathcal B})$ then 
${\rm Per}(\Omega) \ge {\rm Per}({\mathcal B})$. 
\item Given $\Omega \subset {\mathbb H}^n$ with compact closure, if ${\mathcal B} \subset {\mathbb H}^n$ is a geodesic ball 
such that ${\rm Vol}(\Omega)={\rm Vol}({\mathcal B})$ then ${\rm Per}(\Omega) \ge {\rm Per}({\mathcal B})$. 
\end{itemize}

Returning now to the original question, the answer for compact manifolds without boundary 
(i.e., closed manifolds) is in the affirmative and is provided by the celebrated Levy-Gromov isoperimetric inequality: 
\begin{itemize}
\item Let $({\mathscr M}, g)$ be a closed Riemannian manifold 
with ${\rm dim}({\mathscr M}) =n \ge 2$, ${\mathscr Ric}({\mathscr M}) \ge n-1 [={\mathscr Ric}({\mathbb S}^n)]$. 
Let $\Omega \subset {\mathscr M}$ be a bounded smooth subdomain and let 
${\mathcal B} \subset {\mathbb S}^{n}$ be a round geodesic open ball (spherical cap). Then we have the implication 
\begin{equation}
\frac{{\rm Vol}(\Omega)} {{\rm Vol}({\mathscr M})} = \frac{{\rm Vol}({\mathcal B})}{{\rm Vol}({\mathbb S}^{n})} 
\implies 
\frac{{\rm Per}(\Omega)} {{\rm Vol}({\mathscr M})} \ge \frac{{\rm Per}({\mathcal B})}{{\rm Vol}({\mathbb S}^{n})}.
\end{equation}
Here ${\mathscr M}$ can be {\it any} closed manifold and the result compares the isoperimetric nature of 
${\mathscr M}$ to that of an associated model space (of constant curvature).
\end{itemize}

This discussioin can be conveniently reformulated in terms of an object called 
the isoperimetric profile (of the Riemannian manifold). To this end, for $ 0 \le v \le 1$ let  
\begin{equation}
I_{({\mathscr M}, g)}(v) = \inf 
\left\{ \frac{{\rm Per}(\Omega)}{{\rm Vol}({\mathscr M})} : \frac{{\rm Vol}(\Omega)} {{\rm Vol}({\mathscr M})} =v \right\}. 
\end{equation}
Then the Levy-Gromov isoperimetric inequality is the statement that $I_{({\mathscr M}, g)} \ge I_{({\mathbb S}^n, g_{{\mathbb S}^n})}$.

\section{The Gauss Space $(\mathbb R^n, g_{{\mathbb R}^n}, d\gamma_n)$}
\label{sec-Gaussian}

The Gauss space is the most fundamental and natural example of a smooth metric measure space (that we will come to shortly). 
It consists of the Euclidean space ${\mathbb R}^n$ ($n\ge1$) equipped with its canonical 
metric $g_{{\mathbb R}^n}$ and the Gaussian measure $d\gamma=d\gamma_n$ 
\begin{equation} \label{Gaussian-measure}
d\gamma_n = \prod_{j=1}^n \frac{e^{- \frac{1}{2} x_j^2}}{\sqrt{2\pi}} dx_j 
= \frac{e^{-\frac{1}{2} |x|^2}}{(2\pi)^\frac{n}{2}} dx.
\end{equation}

In the study of isoperimetric inequality for Gauss space, {\it half-spaces} play an important role. For unit vector 
$u \in {\mathbb R}^n$ and $a \in \mathbb R$, let ${\mathscr H}_a = \{x \in {\mathbb R}^n : \langle x, u \rangle \le a\}$. 
Moreover, for any Borel measurable set $A \subset {\mathbb R}^n$ set 
\begin{equation}
\gamma_n^+ (A) = \liminf_{\varepsilon \searrow 0} \frac{\gamma_n(A_\varepsilon) - \gamma_n(A)}{\varepsilon}, 
\qquad \gamma_n(A) = \int_{A} d \gamma_n, 
\end{equation}
where for $\varepsilon>0$ we write $A_\varepsilon = \{x \in {\mathbb R^n | {\rm dist}(x, A) < \varepsilon}\}$. 
Here $\gamma_n^+$ is called Minkowski's exterior boundary measure (or content or simply perimeter). Then we have
\begin{equation}
\gamma_n(A) = \gamma_n({\mathscr H}_a) \implies \gamma_n^+(A) \ge \gamma_n^+({\mathscr H}_a).
\end{equation}
Thus among all sets of a fixed Gaussian measure, half-spaces have the least perimeter.

As with the Levy-Gromov case, one can introduce the isoperimetric profile by writing, 
$I(v) = \inf \{\gamma_n^+ (A) : \gamma_n(A) =v\}$ for $v \in [0,1]$. The Gaussian 
isoperimetric inequality then asserts that
\begin{equation} \label{Gaussian-isoperimetric-inequality}
I(\gamma_n(A)) \le \gamma_n^+(A).
\end{equation}
Here one can also show that (regardless of $n$) $I = \phi \circ \Phi^{-1} : [0,1] \to [0, 1/\sqrt {2\pi}] $ where 
\begin{equation}
\Phi(r) = \gamma_1 (-\infty, r) = \int_{-\infty}^r \frac{e^{-\frac{1}{2} x^2}}{\sqrt{2\pi}} dx, \qquad \phi=\Phi'.
\end{equation}
Note that for half-spaces we have $\Phi(a)=\gamma_n({\mathscr H}_a)$ and $\phi(a)=\gamma_n^+({\mathscr H}_a)$.

\qquad \\
{\bf Functional inequalities on $(\mathbb R^n, g_{{\mathbb R}^n}, d\gamma_n)$.} With this introduction on isoperimetric 
inequalities, and still within the context of the Gauss space $(\mathbb R^n, g_{{\mathbb R}^n}, d\gamma_n)$, we present 
two important functional inequalities, that will serve as motivation for the further development of this article. These are in 
turn the Poincar\'e inequality (also called the spectral gap inequality for the Hermite polynomials) and the logarithmic 
Sobolev inequality. (See, e.g., \cite{Bak, VC} and the references therein for more.)

\begin{itemize}
\item The first inequality takes the form:  
\begin{equation} \label{Gaussian-Poincare}
\int_{\mathbb R^n} f^2 \, d\gamma_n 
- \left( \int_{\mathbb R^n} f \, d\gamma_n \right)^2 \le \int_{\mathbb R^n} |\nabla f|^2 d\gamma_n.  
\end{equation}
\item The second inequality, i.e., the logarithmic Sobolev inequality takes the form:  
\begin{equation} \label{Gaussian-LSI}
\int_{\mathbb R^n} |f|^2 \log |f|^2 \, d\gamma_n 
- \int_{\mathbb R^n} |f|^2 \, d\gamma_n \log \int_{\mathbb R^n} |f|^2 \, d\gamma_n 
\le 2 \int_{\mathbb R^n} |\nabla f|^2 \, d\gamma_n, 
\end{equation}
or equivalently 
\begin{equation}
\int_{\mathbb R^n} |f|^2 \log \frac{|f|^2}{||f||^2_{L^2(d\gamma_n)}} \, d\gamma_n 
\le 2 \int_{\mathbb R^n} |\nabla f|^2 \, d\gamma_n.
\end{equation}
\end{itemize}

Logarithmic Sobolev inequalities are well-known to be of huge significance and to have numerous applications. 
In the context described above, one important implication is the following hypercontractivity of the Nelson 
semigroup $P_t=e^{tN}$ ($1<p <2$): \footnote{One interesting application of this hypercontractivity is to polynomial chaos.}
\begin{equation*}
||e^{tN} f||^2_{L^2(d\gamma_n)} = \int_{\mathbb R^n} |e^{tN} f|^2 \, d\gamma_n \le ||f||^2_{L^p(d\gamma_n)}, \qquad e^{-t} \le \sqrt{p-1}.
\end{equation*}

In relation to the discussion on logarithmic Sobolev inequalities above there two useful points to remember. 
\begin{itemize}
\item Firstly, the function $r \mapsto \phi(r) = \log ||f||_{L^{1/r}(d\mu)}$ for $0<r<1$ is convex.  
\item Secondly, the derivative of the $L^p$-norm with respect to $p$ is given by the formula:  
\begin{align} \label{measure-norm-p-derivative}
\frac {d}{dp} \|f\|_{L^p(d\mu)} 
&= \frac{1}{p^2} \|f\|_{L^p(d\mu)}^{1-p}
\left[ \int_{\mathscr M} |f|^p \log |f|^p \, d\mu - \int_{\mathscr M} |f|^p \, d\mu \log \int_{\mathscr M} |f|^p \, d\mu \right] \nonumber \\
&= \frac{1}{p^2} \|f\|_{L^p(d\mu)}^{1-p} \int_{\mathscr M} |f|^p \log \frac{|f|^p}{||f||^p_{L^p(d\mu)}} \, d\mu. 
\end{align}
\end{itemize}
{\bf Notation.} We make use of the following notations for Energy, Fisher Information, Variance and Entropy 
of a function $f$ on $({\mathscr M}, g, d\mu)$. Here we assume $f>0$. Moreover, later we often take $d\mu$ 
a probability measure, although for the sake of these notations, this restriction is not necessary.
\begin{equation}
{\rm E}_\mu (f) = \int_{\mathscr M} |\nabla f|^2 \, d\mu, \qquad I_\mu(f) = \int_{\mathscr M} \frac{|\nabla f|^2}{f} \, d\mu,  
\end{equation}
\begin{equation}
{\rm Var}_\mu (f) = \int_{\mathscr M} f^2 \, d\mu - \left( \int_{\mathscr M} f \, d\mu \right)^2,
\end{equation}
\begin{align}
{\rm Ent}_\mu (f) &= \left[ \int_{\mathscr M} f \log f \, d\mu - \int_{\mathscr M} f \, d\mu \log \int_{\mathscr M} f \, d\mu \right].
\end{align}
The variance and entropy are easily seen to be non-negative, when $d\mu$ is a probability measure, by the use of Jensen's 
inequality. (For entropy use the convex function $\phi(t)=t \log t$ for $t>0$.) With the notations above, 
we can re-write \eqref{Gaussian-Poincare} and \eqref{Gaussian-LSI} as 
\begin{equation}
{\rm Var}_{\gamma_n}(f) \le {\rm E}_{\gamma_n}(f), \qquad  
{\rm Ent}_{\gamma_n} (f^2) \le 2 {\rm E}_{\gamma_n} (f), 
\end{equation} 
respectively. Furthermore we can re-write \eqref{measure-norm-p-derivative} as 
\begin{align} 
\frac {d}{dp} \|f\|_{L^p(d\mu)} = \frac{1}{p^2} \|f\|_{L^p(d\mu)}^{1-p} {\rm Ent}_\mu(|f|^p).
\end{align}

\qquad \\
{\bf Geometric and Functional Inequalities on $\mathbb R^n$.} It is natural and useful to list here (for purely 
motivational purposes) {\it some} of the geometric and functional inequalities on ${\mathbb R}^n$. 
Each of these inequalities underlines, signifies or links to a specific feature of ${\mathbb R}^n$. We also 
refrain from writing the word ``{\it inequality}" each time for brevity. These are: Nash, Hardy, Poincar\'e, 
Faber-Krahn, Isoperimetric, Loomis-Whitney, Brascamp-Lieb, Hausdorff-Young, Logarithmic Sobolev, 
Caffarelli-Kohn-Nirenberg, Gagliardo-Nirenberg-Sobolev, Littlewood-Paley $g$-function, 
Hardy-Littlewood-Wiener maximal function, Isocapacitory, measure-capacity to name a few. 
(For more, {\it see} \cite{AGS, AMS, CavMon-1, CavMon-2, Milman, Ruz-Tur, Ruz-Sur, Taheri-book-one, Taheri-book-two, VC, Zhang} and the references therein.)

\qquad \\
{\bf Stages of Generalisations.}
In passing from ${\mathbb R}^n$ to more general spaces, depending on the algebraic/geometric structure 
of the space involved or the technicalities available, one can present the following list:  
\footnote{The first two on the list are generalisations of ${\mathbb R}^n$ with a view towards maintaining 
the harmonic analysis tools. In the first case, one can essentially define the Fourier transform (Gelfand 
transform, characters and Pontryagin duality). In the second case, the situation is much more complicated, but 
there are more advanced tools of a similar nature available (Representation theory, Peter-Weyl theory, 
Harish-Chandra theory, Fourier-Helgason transform and Plancherel measure to name a few). As for the 
others, the set of techniques is vast. In the last case the theory of optimal transport is a key tool. }
\begin{itemize}
\item[$\bullet$]  Locally Compact Abeilan Groups 
\item[$\bullet$] Lie Groups, Homogenous Spaces, Symmetric Spaces
\item[$\bullet$] Riemannian Manifolds $({\mathscr M}, g)$ 
\item[$\bullet$] Smooth Metric Measure Spaces $({\mathscr M}, g, d\mu)$
\item[$\bullet$] Metric Measure Spaces $(X, \mathsf d, \mathfrak m)$
(No differentiable structure!)
\end{itemize}

\section{Smooth metric measure spaces $({\mathscr M}, g, d\mu)$}

A smooth metric measure space is a triple $({\mathscr M}, g, d\mu)$ where $({\mathscr M}, g)$ 
is a smooth Riemannian manifold of dimension $n \ge 2$, $d\mu=e^{-\varphi} dv_g$ is a positive 
weighted measure and $dv_g$ is the Riemannian volume measure on ${\mathscr M}$. 
The diffusion operator associated with the triple $(\mathscr M,g,d\mu)$ 
is the $\varphi$-Laplacian (that depending on the context is also called the Witten, drifting 
or weighted Laplacian). It is defined for $w \in \mathscr{C}^2(\mathscr M)$ by 
\begin{equation} \label{f-Lap-definition}
\Delta_\varphi w = \Delta w - \langle \nabla \varphi, \nabla w\rangle.  
\end{equation} 
As is readily seen it is a symmetric diffusion operator with respect to the invariant measure $d\mu$ 
and in the Riemannian context $\varphi \equiv 0$ (i.e., $d\mu=dv_g$) coincides with the usual 
Laplace-Beltrami operator.

\qquad \\
{\bf The Ricci tensors ${\mathscr Ric}_\varphi^m(g)$ and the $\varphi$-Laplacian $\Delta_\varphi$.} 
The Bakry-\`Emery $m$-Ricci curvature tensor ${\mathscr Ric}^m_\varphi(g)$ associated with the 
triple $({\mathscr M}, g, d\mu)$ is defined as
\begin{equation} \label{Ricci-m-f-intro}
{\mathscr Ric}^m_\varphi(g) = {\mathscr Ric}(g) + \nabla\nabla \varphi - \frac{\nabla \varphi \otimes \nabla \varphi}{m-n}, \qquad m \ge n. 
\end{equation}
Here ${\mathscr Ric}(g)$ is the usual Riemannain Ricci curvature tensor of $g$, $\nabla \nabla \varphi={\rm Hess}(\varphi)$ 
denotes the Hessian of $\varphi$, and $m \ge n$ is a constant ({\it see} \cite{Bak}). For the sake of clarification, in 
\eqref{Ricci-m-f-intro}, when $m=n$, by convention $f$ is only allowed to be a constant, thus giving 
${\mathscr Ric}^n_\varphi(g)={\mathscr Ric}(g)$. By formally taking the limit $m \nearrow \infty$ in 
\eqref{Ricci-m-f-intro} we also define
 \begin{equation} \label{Ricci-f-intro}
{\mathscr Ric}_\varphi(g)={\mathscr Ric}(g) + \nabla\nabla \varphi. 
\end{equation} 
The significance of \eqref{Ricci-f-intro} lies in its appearance in the weighted Bochner-Weitzenb\"ock formula, asserting 
that for any function $w$ of class ${\mathscr C}^3(M)$ it holds,   
\begin{equation} \label{Bochner-1}
\frac{1}{2} \Delta_\varphi |\nabla w|^2 - \langle \nabla w, \nabla \Delta_\varphi w \rangle
= |\nabla\nabla w|^2 + {\mathscr Ric}_\varphi (\nabla w, \nabla w).
\end{equation}
The above formula does not have a counterpart for $m<\infty$, however, by an application of Cauchy-Schwarz inequality, 
one can easily show that the following {\it inequality} holds for 
${\mathscr Ric}_\varphi^m(g)$, 
\begin{equation} \label{Bochner-2}
\frac{1}{2} \Delta_\varphi |\nabla w|^2 - \langle \nabla w, \nabla \Delta_\varphi w \rangle
\ge \frac{1}{m} |\Delta_\varphi w|^2 + {\mathscr Ric}_\varphi^m (\nabla w, \nabla w).
\end{equation}
Both these relations and the corresponding Ricci tensors will appear in the formulation 
of the curvature-dimension condition below.  (See also \cite{AGS, Bak, Taheri-GE-1, Taheri-GE-2, TVahNA, TVahCurv} for more.)

\qquad \\
{\bf Some examples of smooth metric measure spaces $({\mathscr M},g,d \mu)$ with $d\mu=e^{-\varphi}dv_g$.} 
Let us proceed by giving some related and useful examples on smooth metric measure spaces to illustrate 
the scope of the theory.

\begin{itemize}
\item (Riemannian) $({\mathscr M}, g)$ with constant $\varphi$: 
$\Delta_\varphi \equiv \Delta$, ${\mathscr Ric}_\varphi^m(g)={\mathscr Ric}_\varphi(g)={\mathscr Ric}(g)$.
\item (Euclidean weighted space) $({\mathbb R}^n, g_{{\mathbb R}^n}, d\mu = e^{-\varphi} dx)$: 
$\Delta_\varphi u = \Delta u - \langle \nabla \varphi, \nabla u \rangle $. 
Here ${\mathscr Ric}_\varphi(g) = {\mathscr Ric}(g) + \nabla \nabla \varphi 
=  \nabla \nabla \varphi$ and so ${\mathscr Ric}_\varphi(g) \ge 0 \iff \varphi$ is convex. 
Furthermore from \eqref{Ricci-m-f-intro} we have 
\begin{align}
{\mathscr Ric}^m_\varphi(g) 
= \nabla \nabla \varphi  - \frac{\nabla \varphi \otimes \nabla \varphi}{m-n}, \qquad n \le m <\infty. 
\end{align}
\item (Ornstein-Uhlenbeck) $({\mathbb R}^n, g_{{\mathbb R}^n}, d\gamma_n)$ with $d\gamma_n$ as in \eqref{Gaussian-measure}: 
$\Delta_\varphi u = \Delta u - \langle x, \nabla u \rangle$. Here 
${\mathscr Ric}_\varphi(g) = \nabla \nabla \varphi = {\rm I}$. Moreover,  
\begin{align}
{\mathscr Ric}^m_\varphi(g) = \nabla \nabla \varphi - \frac{\nabla \varphi \otimes \nabla \varphi}{m-n} 
= {\rm I} - \frac{x \otimes x}{m-n}, \qquad n \le m <\infty. 
\end{align}
\item (Interval case) ${\mathscr M}= (a, b)$, $d\mu = e^{-\varphi} dx$: $\Delta_\varphi u = u''-\varphi'u'$. 
Here it is seen that 
\begin{align}
{\mathscr Ric}^m_\varphi(g) \ge k, \qquad {\text when}  \qquad \varphi'' \ge k + \varphi'^2/(m-1).
\end{align}
The cases $d\mu = (\sin t)^{m-1} \, dt$ on $(0, \pi)$ and $d\mu = (\sinh t)^{m-1} \, dt$ on $(0, \infty)$ 
arise in the study of isoperimetric inequality on ${\mathbb S}^n$ and ${\mathbb H}^n$ 
under ${\rm CD}(\pm 1,m)$ for $m \ge n$ respectively.
\item (Jacobi polynomials: $y ={\mathscr P}^{(\alpha, \beta)}_k(t)$ with $\alpha, \beta>-1, k \ge 0$) 
Here ${\mathscr M}=(-1,1)$ and $d\mu= (1-t)^\alpha (1+t)^\beta dt$, 
\begin{equation}
\left( 1-t^{2}\right) \frac{d^2y}{dt^2} - (\alpha-\beta+(\alpha+\beta+2)t) \frac{dy}{dt} + k(k+\alpha+\beta+1) y =0. 
\end{equation}
\item Orthogonal Polynomials (Bessel, Hermite, Jacobi, Laguerre, Tchebychev, Ultra-spherical, Legendre, 
Hypergeometric series, etc.) and Sturm-Liouville systems.
\end{itemize}

\qquad \\
{\bf The curvature-dimension condition ${\rm CD}(k, m)$.} 
In the context of smooth metric measure spaces one imposes Ricci curvature lower bounds in the form 
${\mathscr Ric}_\varphi(g) \ge k g$ or the stronger form ${\mathscr Ric}^m_\varphi(g) \ge k g$ for some fixed 
constants $k \in {\mathbb R}$ and $n \le m<\infty$. This enables one to use various comparison results (e.g., 
Hessian and Laplace comparison, Bishop-Gromov volume comparison, {\it etc.}) that are crucial instruments in 
doing analysis and geometry. 

A curvature lower bound in the form ${\mathscr Ric}^m_\varphi(g) \ge k g$ (with $k \in {\mathbb R}$) 
implies that the diffusion operator $L=\Delta_\varphi$ satisfies the curvature-dimension 
condition ${\rm CD}(k,m)$. To elaborate on this last point 
further, we first recall the definition of the carre du champ operator $\Gamma[L]$ 
associated with a Markov diffusion operator $L$. This is given by 
\begin{equation} \label{Gamma-One-L-eq}
\Gamma[L] (u,v) = \Gamma_1[L] (u,v) := \frac{1}{2} [L(uv) - u L v - v Lu]. 
\end{equation}

Higher order iterates are then defined inductively by replacing the product operation with $\Gamma$ respectively. 
Indeed, the second order iterated carre du champ operator $\Gamma_2[L]$, can be seen to be, 
\begin{equation} \label{Gamma-Two-L-eq}
\Gamma_2[L] (u,v) := \frac{1}{2} [ L \Gamma(u,v) - \Gamma (u, Lv) - \Gamma (Lu, v)]. 
\end{equation}

For $u=v$ we write $\Gamma[L](u)=\Gamma[L](u,u)$ and $\Gamma_2[L](u)=\Gamma_2[L](u,u)$. 
Now the Markov diffusion operator $L$ is said to satisfy the Bakry-\'Emery curvature-dimension 
condition ${\rm CD}(k,m)$ {\it iff} for some $m \ge 2$ and $k \in {\mathbb R}$, 
\begin{equation} \label{CDkm-definition-equation}
\Gamma_2[L](w) =  \frac{1}{2} L \Gamma(w) - \Gamma (w, Lw) \ge \frac{1}{m} (Lw)^2 + k \Gamma[L](w). 
\end{equation}

By a direct calculation, it is seen that for $L=\Delta_\varphi$ as in \eqref{f-Lap-definition} 
and $\Gamma[L]$ and $\Gamma_2[L]$ as in \eqref{Gamma-One-L-eq} and \eqref {Gamma-Two-L-eq}, we have the first 
and second order relations 
\begin{equation} \label{Un-iterated-CDC}
\Gamma[L] (w) = |\nabla w|^2,
\end{equation}
\begin{equation} \label{iterated-CDC}
\Gamma_2 [L] (w) = \frac{1}{2} L |\nabla w|^2 - \langle \nabla w, \nabla L w \rangle, 
\end{equation} 
respectively. Hence, in light of \eqref{Bochner-2}, \eqref{Un-iterated-CDC} and \eqref{iterated-CDC} it follows from the 
curvature lower bound ${\mathscr Ric}^m_\varphi(g) \ge k g$ that here the 
iterated carre du champ operator $\Gamma_2[L]$ satisfies the inequality 
\begin{align} \label{CD-mk}
\Gamma_2[L](w) 
&\ge \frac{1}{m} (L w)^2 + {\mathscr Ric}^m_\varphi (\nabla w, \nabla w) \nonumber \\
&\ge \frac{1}{m} (L w)^2 + k |\nabla w|^2 = \frac{1}{m} (Lw)^2 + k \Gamma[L](w). 
\end{align}
In contrast, a curvature lower bound in the form ${\mathscr Ric}_\varphi(g) \ge k g$ implies the curvature-dimension condition 
${\rm CD}(k,\infty)$, in the sense that by virtue of \eqref{Bochner-1}, \eqref{Un-iterated-CDC} and \eqref{iterated-CDC} one only has
\begin{align} \label{CD-inftyk}
\Gamma_2[L](w) 
&= |\nabla \nabla w|^2 + {\mathscr Ric}_\varphi (\nabla w, \nabla w) 
\ge k |\nabla w|^2 = k \Gamma[L](w). 
\end{align}

Curvature lower bounds and/or curvature-dimension conditions are instrumental for doing analysis 
and geometry on smooth metric measure spaces. Below we see their significance in Poincar\'e and 
logarithmic Sobolev inequalities. First let us look at some equivalent versions of the conditions 
formulated in terms of the semigroup $P_t = e^{t\Delta_\varphi}$. Below we assume that $d\mu$ 
is a probability measure and throughout we take $f>0$.

\begin{theorem}
The following conditions for $k>0$ are equivalent: 
\begin{itemize}

\item[$\rhd$] $($Curvature-Dimension$)$ ${\rm CD}(k, \infty)$.
\footnote{See \eqref{CDkm-definition-equation}--\eqref{CD-inftyk}, in particular, the equivalence with ${\mathscr Ric}_\varphi(g) \ge kg$. 
On ${\mathbb R}^n$ this is the {\it uniform convexity} of $\varphi$, i.e., ${\mathscr Ric}_\varphi(g) \ge kg \iff \nabla \nabla \varphi \ge k {\rm I}$.}
\item[$\rhd$] $($Commutation$)$ $|\nabla P_t f| \leq e^{-kt} P_t |\nabla f|$.
\item[$\rhd$] $($Contraction in Wasserstein Space$)$ 
\begin{equation}
W_2(P_t f d\mu, P_t g d\mu) \le e^{-kt} W_2(f d\mu, g d\mu).
\end{equation} 
\end{itemize}
\end{theorem}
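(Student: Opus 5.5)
The plan is to prove the cycle CD$(k,\infty)$ $\Rightarrow$ Commutation $\Rightarrow$ Contraction, together with Commutation $\Rightarrow$ CD$(k,\infty)$ and Contraction $\Rightarrow$ Commutation. Throughout we work with smooth, positive, bounded $f$ with bounded gradient, so that all semigroup manipulations are justified; this is where completeness of $({\mathscr M},g)$ and stochastic completeness, implied by the lower Ricci bound, are used.

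\emph{CD$(k,\infty)\Rightarrow$ Commutation.} Fix $t>0$ and $\varepsilon>0$, and set
\[
\Lambda(s)=e^{-2k s}\,P_s\bigl(\sqrt{\varepsilon+|\nabla P_{t-s}f|^2}\bigr)^2
\]
for $0\le s\le t$. A cleaner route is to consider $\psi(s)=P_s\bigl(\sqrt{\varepsilon+\Gamma(P_{t-s}f)}\bigr)$. Differentiating gives
\[
\psi'(s)=P_s\bigl(L\Phi-\partial_t\Phi\bigr), \qquad \Phi=\sqrt{\varepsilon+\Gamma(g)}, \quad g=P_{t-s}f.
\]
Next we compute $L\sqrt{\varepsilon+\Gamma(g)}-\Gamma(g,Lg)/\sqrt{\cdot}$ using \eqref{iterated-CDC}. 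This yields
\[
\bigl(\Gamma_2(g)\,(\varepsilon+\Gamma g)-\tfrac14\Gamma(\Gamma g)\bigr)\big/(\varepsilon+\Gamma g)^{3/2}.
\]
The key ingredient is the strengthened Bochner estimate $\Gamma_2(g)\ge k\Gamma(g)+\frac{\Gamma(\Gamma g)}{4\Gamma g}$. This follows from \eqref{Bochner-1} with ${\mathscr Ric}_\varphi\ge k$ and the refined Kato inequality $|\nabla\nabla g|^2\ge |\nabla|\nabla g||^2$. With it one obtains $\psi'(s)\ge k\,P_s\bigl(\Gamma(g)/\sqrt{\varepsilon+\Gamma g}\bigr)$. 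Letting $\varepsilon\to0$ gives $\frac{d}{ds}\bigl(e^{-ks}P_s|\nabla P_{t-s}f|\bigr)\ge0$, and comparing $s=0$ with $s=t$ gives the claim. I expect this to be the main technical step: the $L^1$-type gradient bound needs this Kato refinement, whereas the weaker $|\nabla P_tf|^2\le e^{-2kt}P_t|\nabla f|^2$ follows directly from \eqref{CD-inftyk}.

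\emph{Commutation $\Rightarrow$ CD$(k,\infty)$.} First square the bound and apply Jensen ($(P_t h)^2\le P_t h^2$) to get $|\nabla P_tf|^2\le e^{-2kt}P_t|\nabla f|^2$. Both sides agree at $t=0$, so we expand each to first order in $t$. The left side is $\Gamma f+2t\,\Gamma(f,Lf)+o(t)$. The right side is $\Gamma f+t\,(L\Gamma f-2k\Gamma f)+o(t)$. Comparing the $t$-coefficients yields $\Gamma_2(f)\ge k\Gamma(f)$ pointwise, which is \eqref{CD-inftyk} and hence ${\mathscr Ric}_\varphi\ge k$.

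\emph{Commutation $\Leftrightarrow$ Contraction.} For the forward direction, use Kuwada's duality. The pointwise bound $|\nabla P_tf|\le e^{-kt}P_t|\nabla f|$ dualises to $W_\infty$ (and in fact $W_p$) contraction of Dirac masses, $W_2(\delta_xP_t,\delta_yP_t)\le e^{-kt}d(x,y)$. Here $\delta_xP_t$ is the heat kernel measure; note that $P_t(f\mu)=(P_tf)\mu$ by symmetry of $L$ with respect to $\mu$. To get contraction for general measures, take an optimal coupling $\pi$ of $f\,d\mu$ and $g\,d\mu$, glue it measurably with optimal couplings of $\delta_xP_t$ and $\delta_yP_t$, and use convexity of $W_2^2$. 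For the converse, take $f\,d\mu$ and $g\,d\mu$ approximating $\delta_x$ and $\delta_y$ and pass to the limit, giving $W_1(\delta_xP_t,\delta_yP_t)\le W_2(\delta_xP_t,\delta_yP_t)\le e^{-kt}d(x,y)$. Then
\[
|P_th(x)-P_th(y)|\le e^{-kt}\,\mathrm{Lip}(h)\,d(x,y)
\]
for every Lipschitz $h$. Applying this to localized Lipschitz constants, Kuwada's argument recovers $|\nabla P_th|\le e^{-kt}P_t|\nabla h|$, although only in the $L^2$ form $|\nabla P_th|^2\le e^{-2kt}P_t|\nabla h|^2$. That $L^2$ form already suffices to run the small-time expansion above and recover CD$(k,\infty)$, which in turn gives the full commutation, closing the loop.
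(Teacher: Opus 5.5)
The paper gives no proof of this statement. It is one of the survey results quoted without proof (the introduction says the material is presented ``without proofs''), so there is no argument in the paper to compare yours with. On its own terms, your outline follows the standard route from the literature: Bakry's $\Gamma_2$ argument with the Kato improvement gives the $L^1$ commutation, the short-time expansion gives the converse, and Kuwada's duality plus measurable gluing of optimal couplings gives the $W_2$ contraction. The logic closes correctly. In particular, you rightly observe that going back from contraction yields only the $L^2$ gradient bound, and that this suffices because the short-time expansion uses only that form.

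A few points need tightening.

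\emph{First step.} Do not let $\varepsilon\to0$ inside a differential inequality. Since $\Gamma(g)/\sqrt{\varepsilon+\Gamma(g)}\ge\sqrt{\varepsilon+\Gamma(g)}-\sqrt{\varepsilon}$ and $P_s1\le1$, you get $\psi'(s)\ge k(\psi(s)-\sqrt{\varepsilon})$. Hence $e^{-ks}(\psi(s)-\sqrt{\varepsilon})$ is nondecreasing; compare $s=0$ with $s=t$, and only then let $\varepsilon\to0$. The inequality $|\nabla|\nabla g||\le|\nabla\nabla g|$ is the ordinary Kato inequality, not a refined one. It is cleaner to use it as $\Gamma(\Gamma(g))\le4|\nabla\nabla g|^2\,\Gamma(g)$, which avoids dividing by $\Gamma(g)$. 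To use the Bochner formula you need ${\mathscr Ric}_\varphi\ge kg$. This follows from $\Gamma_2\ge k\Gamma$ by testing at $x$ with a function $w$ having $\nabla w(x)=v$ and $\nabla\nabla w(x)=0$. Also drop the abandoned definition of $\Lambda(s)$.

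\emph{Last step.} The global Lipschitz bound you extract from $W_1$ yields by itself only $\|\nabla P_th\|_\infty\le e^{-kt}\|\nabla h\|_\infty$, not a pointwise bound. The pointwise estimate $|\nabla P_th|^2\le e^{-2kt}P_t|\nabla h|^2$ comes from the other direction of Kuwada's duality, applied to $W_2(\delta_xP_t,\delta_yP_t)\le e^{-kt}d(x,y)$ itself: take an optimal $W_2$ coupling and apply Cauchy--Schwarz against the local Lipschitz constant of $h$. Cite that direction explicitly rather than the $W_1$ bound.

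\emph{Forward direction.} The $W_\infty$ detour is unnecessary. Jensen gives the $L^2$ gradient bound, and Kuwada with $p=q=2$ gives the Dirac-mass $W_2$ bound directly.

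\emph{Conservativeness.} In the converse directions you cannot appeal to the curvature bound for stochastic completeness. Use instead that $\mu$ is a probability measure on a complete manifold, since finite volume gives stochastic completeness. This is needed anyway for $P_tf\,d\mu$ to be a probability measure.
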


\begin{theorem} \label{thm-Poincare-CD}
${\rm CD}(k, \infty) \implies$ Poincaré inequality with constant $1/k$.
\end{theorem}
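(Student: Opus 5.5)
The plan is the standard semigroup interpolation argument. We take $f$ smooth and bounded with bounded gradient, and $d\mu$ a probability measure. We will use that $P_t = e^{t\Delta_\varphi}$ is symmetric with respect to $d\mu$ and that $\int P_t h \, d\mu = \int h \, d\mu$. We also use ergodicity, $P_t f \to \int f \, d\mu$ as $t \to \infty$ (in $L^2(d\mu)$). Then
\begin{equation*}
{\rm Var}_\mu(f) = \int_{\mathscr M} f^2 \, d\mu - \int_{\mathscr M} (P_\infty f)^2 \, d\mu = -\int_0^\infty \frac{d}{dt} \int_{\mathscr M} (P_t f)^2 \, d\mu \, dt .
\end{equation*}
Differentiating and integrating by parts against $d\mu$ gives
\begin{equation*}
\frac{d}{dt}\int_{\mathscr M} (P_tf)^2\,d\mu = 2\int_{\mathscr M} P_tf\, \Delta_\varphi P_t f\,d\mu = -2\int_{\mathscr M} |\nabla P_t f|^2\,d\mu .
\end{equation*}
Consequently
\begin{equation*}
{\rm Var}_\mu(f) = 2\int_0^\infty \int_{\mathscr M} |\nabla P_t f|^2 \, d\mu \, dt .
\end{equation*}

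Next we invoke the commutation condition from the preceding theorem, which is equivalent to ${\rm CD}(k,\infty)$: $|\nabla P_t f| \le e^{-kt} P_t|\nabla f|$. Squaring this and applying Jensen's (Cauchy--Schwarz) inequality for the Markov kernel, $(P_t|\nabla f|)^2 \le P_t(|\nabla f|^2)$, yields
\begin{equation*}
\int_{\mathscr M} |\nabla P_t f|^2 \, d\mu \le e^{-2kt}\int_{\mathscr M} P_t(|\nabla f|^2)\, d\mu = e^{-2kt} \int_{\mathscr M} |\nabla f|^2\, d\mu,
\end{equation*}
where the last step uses invariance of $d\mu$. Integrating in $t$ then gives ${\rm Var}_\mu(f) \le 2\int_0^\infty e^{-2kt}\,dt \; {\rm E}_\mu(f) = \frac1k {\rm E}_\mu(f)$, which is the Poincar\'e inequality with constant $1/k$.

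An alternative route, which avoids relying on the commutation condition as a black box, is to set $\Lambda(t)=\int|\nabla P_tf|^2d\mu$. Using \eqref{iterated-CDC} and invariance, one computes $\Lambda'(t) = -2\int \Gamma_2[L](P_tf)\,d\mu \le -2k\Lambda(t)$ by \eqref{CD-inftyk}, so $\Lambda(t)\le e^{-2kt}\Lambda(0)$ by Gr\"onwall's inequality. This gives the same conclusion.

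The main obstacle is justifying the convergence $P_tf \to \int f\,d\mu$ as $t\to\infty$, together with the integrations by parts on a possibly noncompact ${\mathscr M}$. For the convergence, the decay $\Lambda(t)\le e^{-2kt}\Lambda(0)$ already shows that $\int (P_tf)^2 d\mu$ is decreasing with a derivative that is integrable on $[0,\infty)$, so the limit exists. To identify it, we would use that $k>0$ forces $\nabla P_tf\to 0$; the limit is therefore constant, and mass conservation pins the constant down as $\int f\,d\mu$. The boundary terms in the integrations by parts we would handle by completeness of $g$ and a standard cutoff argument, first for compactly supported $f$ and then extending to general $f$ by density.
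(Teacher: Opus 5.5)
Your proposal is correct and follows the paper's proof essentially line by line. Both arguments write ${\rm Var}_\mu(f)=-\int_0^\infty\frac{d}{dt}\int_{\mathscr M}(P_tf)^2\,d\mu\,dt=2\int_0^\infty\int_{\mathscr M}|\nabla P_tf|^2\,d\mu\,dt$, then apply the commutation bound $|\nabla P_tf|\le e^{-kt}P_t|\nabla f|$, Jensen, invariance of $\mu$, and $2\int_0^\infty e^{-2kt}\,dt=1/k$; your Gr\"onwall variant via $\Gamma_2$ and your treatment of $t\to\infty$ and of the integrations by parts add rigor the paper leaves implicit (for the limit, what you actually need is strong $L^2(d\mu)$ convergence $P_tf\to\int f\,d\mu$, which the spectral theorem gives because $\ker\Delta_\varphi$ consists of constants).
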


\begin{proof}
Note that Poincaré inequality (spectral gap) amounts to showing the inequality 
${\rm Var}_\mu(f) \le (1/\lambda) {\rm E}_\mu(f)$ where the smallest such 
$\lambda>0$ is the first (non-zero) eigenvalue of $-\Delta_\varphi$ on ${\mathscr M}$. Thus 
we write 
\begin{align} \label{PI-SMMS}
{\rm Var}_\mu(f) &= \int_{\mathscr M} f^2 \, d \mu - \left(\int_{\mathscr M} f \, d \mu \right)^2 
= \int_{\mathscr M} \left|f - \int_{\mathscr M} f \, d\mu \right|^2 \, d\mu \nonumber \\
&= - \int^{\infty}_0 \frac{d}{dt} \int_{\mathscr M} [P_t f]^2  \, d\mu \, dt 
= - 2 \int^{\infty}_0 \int_{\mathscr M} [\Delta_\varphi P_t f] [P_t f] \, d \mu \, dt \nonumber \\
&= 2 \int^{\infty}_0 \int_{\mathscr M} |\nabla P_t f|^2 \, d \mu \, dt 
\le 2 \int^{\infty}_0 \int_{\mathscr M} e^{-2kt} P_t |\nabla f|^2 \, d \mu \, dt \nonumber \\
&\le \frac{1}{k} \int_{\mathscr M} |\nabla f|^2 \, d\mu = \frac{1}{k} {\rm E}_\mu(f)
\end{align}
as required.
\end{proof}

\begin{remark}{\em The Poincar\'e inequality ${\rm Var}_\mu(f)  \le (1/k) {\rm E}_\mu(f)$ 
gives exponential decay in {\it variance}: ${\rm Var}_\mu (P_t f) \le e^{-2kt} {\rm Var}_\mu(f)$. 
Note that the curvature-dimension condition ${\rm CD}(k,m)$ gives a Poincar\'e inequality with 
constant $(m-1)/(mk)$.
}
\end{remark}

\begin{theorem}
${\rm CD}(k, \infty) \implies$ Logarithmic Sobolev inequality with constant $1/k$. 
\end{theorem}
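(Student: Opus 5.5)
We prove the inequality in its entropy/Fisher-information form, ${\rm Ent}_\mu(f) \le \frac{1}{2k} I_\mu(f)$ for $f>0$. Substituting $f=h^2$ gives $I_\mu(h^2)=4{\rm E}_\mu(h)$, so this is the same as ${\rm Ent}_\mu(h^2) \le \frac{2}{k}{\rm E}_\mu(h)$, which is the constant $1/k$ normalisation matching \eqref{Gaussian-LSI} when $k=1$. The method copies the proof of Theorem \ref{thm-Poincare-CD}: we interpolate along the semigroup $P_t=e^{t\Delta_\varphi}$, using the functional $\Lambda(t)=\int_{\mathscr M} P_tf\log P_tf\,d\mu$ in place of $\int (P_tf)^2\,d\mu$. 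By ergodicity, $P_tf\to\int f\,d\mu$ as $t\to\infty$. Since $P_t$ is mass preserving, this gives
\begin{equation*}
{\rm Ent}_\mu(f) = -\int_0^\infty \Lambda'(t)\,dt .
\end{equation*}
Differentiating and integrating by parts, using the symmetry of $\Delta_\varphi$ with respect to $d\mu$, gives
\begin{equation*}
\Lambda'(t)=\int_{\mathscr M} (\Delta_\varphi P_tf)(1+\log P_tf)\,d\mu = -I_\mu(P_tf).
\end{equation*}
Hence ${\rm Ent}_\mu(f)=\int_0^\infty I_\mu(P_tf)\,dt$.

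The key step is the exponential decay $I_\mu(P_tf)\le e^{-2kt}I_\mu(f)$. I would obtain it from the commutation property in the theorem above, $|\nabla P_tf|\le e^{-kt}P_t|\nabla f|$, which is equivalent to ${\rm CD}(k,\infty)$. Write $|\nabla f| = \frac{|\nabla f|}{\sqrt f}\sqrt f$ and apply Cauchy--Schwarz for the Markov operator $P_t$:
\begin{equation*}
(P_t|\nabla f|)^2 \le P_t\!\left(\frac{|\nabla f|^2}{f}\right) P_tf .
\end{equation*}
Combining this with commutation gives
\begin{equation*}
\frac{|\nabla P_tf|^2}{P_tf}\le e^{-2kt}P_t\!\left(\frac{|\nabla f|^2}{f}\right).
\end{equation*}
Integrating against $d\mu$ and using the invariance $\int P_t g\,d\mu=\int g\,d\mu$ yields $I_\mu(P_tf)\le e^{-2kt}I_\mu(f)$. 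Then
\begin{equation*}
{\rm Ent}_\mu(f)\le \int_0^\infty e^{-2kt}\,dt\; I_\mu(f)=\frac{1}{2k}I_\mu(f),
\end{equation*}
which is the claim.

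The main obstacle is justifying the limiting and differentiation steps. These are: the convergence $\Lambda(t)\to\int f\,d\mu\log\int f\,d\mu$; differentiation under the integral sign; and integration by parts without boundary terms on a possibly noncompact ${\mathscr M}$. I would first treat $f$ smooth, bounded and bounded away from zero, with $\nabla f$ bounded. For such $f$, $P_tf$ stays in the same class and all manipulations are legitimate. Convergence to equilibrium follows from the Poincar\'e inequality of Theorem \ref{thm-Poincare-CD} via the variance decay noted in the subsequent remark. The general case then follows by a standard truncation and density argument, using lower semicontinuity of entropy and monotone convergence for $I_\mu$. If the $L^1$ form of commutation were unavailable, I would instead use the weaker $|\nabla P_tf|^2\le e^{-2kt}P_t|\nabla f|^2$. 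This follows from ${\rm CD}(k,\infty)$ by differentiating $s\mapsto P_s(|\nabla P_{t-s}f|^2)$. Applied to $h=\sqrt f$ in the form ${\rm Ent}_\mu(h^2)$, it gives the same result, though the convexity step needs more care.
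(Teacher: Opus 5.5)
Your proposal is correct and is essentially the paper's proof. Both write ${\rm Ent}_\mu(f)=\int_0^\infty I_\mu(P_tf)\,dt$ by semigroup interpolation, then bound $|\nabla P_tf|^2/P_tf$ by $e^{-2kt}P_t(|\nabla f|^2/f)$ using the $L^1$ commutation $|\nabla P_tf|\le e^{-kt}P_t|\nabla f|$ together with Cauchy--Schwarz for $P_t$, and finally integrate using invariance of $\mu$ to obtain the factor $1/(2k)$. Your regularity and density remarks go beyond the paper, which performs these steps formally. The closing $L^2$-commutation fallback is unnecessary, and as sketched it does not directly control $|\nabla P_t(h^2)|^2/P_t(h^2)$ (you would need a $\Gamma_2$ computation for $\log P_tf$ instead), so you can simply drop it.
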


\begin{proof}
The claim amounts to showing ${\rm Ent}_\mu (f^2) \le 1/(2k) I_\mu(f^2) = (2/k) {\rm E}_\mu (f)$. Thus we write  
\begin{align}
{\rm Ent}_\mu (f) 
&= \int_{\mathscr M} f \log f \, d \mu - \int_{\mathscr M} f \, d \mu \log \int_{\mathscr M} f \, d \mu \nonumber \\
&= - \int^{\infty}_0 \frac{d}{dt} \int_{\mathscr M} [P_t f \log P_t f] \, dt \, d \mu 
= - \int^{\infty}_0 \int_{\mathscr M} [\Delta_\varphi P_t f] [\log P_t f] \, dt \, d \mu \nonumber \\
&=  \int_{\mathscr M} \int^{\infty}_0 \frac{|\nabla P_t f|^2}{P_t f} \, dt \, d \mu 
\le  \int_{\mathscr M} \int^{\infty}_0 e^{-2kt} \frac{[P_t |\nabla f|]^2}{P_t f} \, dt \, d \mu.
\end{align}
Now by Cauchy-Schwarz inequality 
\begin{align}
[P_t |\nabla f|]^2 = \left[ P_t \left( \sqrt f \frac{|\nabla f|}{\sqrt f} \right) \right]^2  \le P_t f \times P_t \left[\frac{|\nabla f|^2}{f} \right].
\end{align}
This yields 
\begin{align} \label{LSI-SMMS}
{\rm Ent}_\mu (f)
&= \int_{\mathscr M} f \log f \, d \mu - \int_{\mathscr M} f \, d \mu \log \int_{\mathscr M} f \, d \mu \nonumber \\
&\le \int_{\mathscr M} \int^{\infty}_0 e^{-2k t} P_t \left[ \frac{|\nabla f|^2}{f} \right] \, dt \, d \mu
\le \frac{1}{2k} \int_{\mathscr M} \frac{|\nabla f|^2}{f} \, d \mu = \frac{1}{2k} I_\mu (f),
\end{align}
as required. 
\end{proof}

\begin{remark}{\em Logarithmic Sobolev inequatiy \eqref{LSI-SMMS} gives exponential decay 
in entropy: ${\rm Ent}_\mu (P_t f) \le e^{-2kt} {\rm Ent}_\mu(f)$.}
\end{remark}

\begin{theorem}
${\rm CD}(k, \infty) \implies$ Isoperimetric inequality $\sqrt k I(\mu(A)) \le \mu^+ (A)$ where 
$I$ is the isoperimetric profile of the Gaussian $[$as in \eqref{Gaussian-isoperimetric-inequality}$]$.
\end{theorem}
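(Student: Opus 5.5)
The plan is to follow the Bakry--Ledoux semigroup approach and prove first a functional form of the Gaussian isoperimetric inequality, namely Bobkov's inequality
\begin{equation*}
I\Big(\int_{\mathscr M} f \, d\mu\Big) \le \int_{\mathscr M} \sqrt{ I(f)^2 + \frac{1}{k}|\nabla f|^2 } \, d\mu, \qquad 0 \le f \le 1 \ \text{smooth},
\end{equation*}
where $I=\phi\circ\Phi^{-1}$ is the Gaussian profile. Applying this to a sequence of Lipschitz functions $f_\varepsilon$ approximating $\mathbf 1_A$, say $f_\varepsilon(x)=(1-{\rm dist}(x,A)/\varepsilon)_+$, gives the claim. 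Indeed $I(0)=I(1)=0$, and $|\nabla f_\varepsilon|\le 1/\varepsilon$ on $A_\varepsilon\setminus A$ and vanishes elsewhere (a.e.). The right-hand side is therefore bounded by $\int I(f_\varepsilon)\,d\mu + k^{-1/2}(\mu(A_\varepsilon)-\mu(A))/\varepsilon$. The first term tends to $0$ since $f_\varepsilon\to\mathbf 1_A$ pointwise (closure issues are handled by taking $A$ closed, or by a standard approximation), and $\int f_\varepsilon\,d\mu\to\mu(A)$. Taking $\liminf$ yields $I(\mu(A))\le k^{-1/2}\mu^+(A)$, which is the statement.

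To prove the functional inequality, I would interpolate along the semigroup $P_t=e^{t\Delta_\varphi}$. For fixed $t>0$ set
\begin{equation*}
J(s)=P_s\Big(\sqrt{I(P_{t-s}f)^2 + c(s)\,|\nabla P_{t-s}f|^2}\Big), \qquad 0\le s\le t,
\end{equation*}
with $c(s)=(1-e^{-2ks})/k$ chosen to match the ${\rm CD}(k,\infty)$ decay. The aim is to show $J$ is nondecreasing, which gives the local inequality
\begin{equation*}
I(P_tf)\le P_t\sqrt{I(f)^2+c(t)|\nabla f|^2}.
\end{equation*}
Integrating against $\mu$ (invariance) and letting $t\to\infty$ (ergodicity, so $P_tf\to\int f\,d\mu$, and $c(t)\to 1/k$) then gives Bobkov's inequality. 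Monotonicity of $J$ reduces to showing that $\Psi=\sqrt{I(g)^2+c|\nabla g|^2}$ with $g=P_{t-s}f$ satisfies $\Delta_\varphi\Psi + c'|\nabla g|^2/(2\Psi) - (\partial_\tau\text{ terms}) \ge 0$. This is a pointwise differential inequality computed by the chain rule for the diffusion $\Delta_\varphi$ using $\Gamma$ and $\Gamma_2$.

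The main obstacle is this pointwise computation. One uses the key identity $I I''=-1$ for the Gaussian profile, which cancels the zeroth-order terms. One then needs the Bochner formula \eqref{Bochner-1} in its strengthened form: under ${\rm CD}(k,\infty)$ one has $\Gamma_2(g)\ge k\Gamma(g)$ together with the refined bound $|\nabla\nabla g|^2\ge |\nabla|\nabla g||^2$ (Kato), equivalently $\Gamma(\Gamma(g))\le 4\Gamma(g)\big(\Gamma_2(g)-k\Gamma(g)\big)$. This refined bound is what absorbs the cross terms $\langle\nabla I(g),\nabla|\nabla g|^2\rangle$ after completing a square. I would first do the computation with the abstract $\Gamma$-calculus, grouping terms into a quadratic form in $(\nabla g,\nabla|\nabla g|)$ and checking nonnegativity via a discriminant. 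The choice of $c(s)$ is exactly what makes the $k$-terms match $c'=2(1-kc)$. Regularity issues, such as positivity of $\Psi$ (replace $f$ by values in $[\delta,1-\delta]$) and justification of differentiating under $P_s$, will be handled by standard approximation.
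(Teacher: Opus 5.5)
The paper states this theorem without proof; it is quoted as a known consequence of ${\rm CD}(k,\infty)$, namely the Bakry--Ledoux theorem. So there is no argument in the text to compare yours with. Your proposal is the standard Bakry--Ledoux route through Bobkov's functional inequality, and it is correct.

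I checked the central computation. Write $U=\Psi^2=I(g)^2+c\,\Gamma(g)$ with $g=P_{t-s}f$. The $\Delta_\varphi g$-terms cancel identically between $\Delta_\varphi\Psi$ and $\partial_s\Psi$. The identity $II''=-1$, together with $c'=2(1-kc)$, turns the remaining coefficient of $\Gamma(g)$ into $2I'^2-2kc$. One obtains exactly
\[
4U\Psi\,(\Delta_\varphi\Psi+\partial_s\Psi)=4c\,I'(g)^2\Gamma(g)^2+4cU\bigl(\Gamma_2(g)-k\Gamma(g)\bigr)-4c\,I(g)I'(g)\,\Gamma\bigl(g,\Gamma(g)\bigr)-c^2\,\Gamma\bigl(\Gamma(g)\bigr).
\]
This is nonnegative by three facts: $\Gamma(\Gamma(g))\le 4\Gamma(g)(\Gamma_2(g)-k\Gamma(g))$, $\Gamma(g,\Gamma(g))^2\le\Gamma(g)\,\Gamma(\Gamma(g))$, and AM--GM. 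That is the discriminant check you anticipated, and it also covers points where $\Gamma(g)=0$.

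Two wordings should be fixed.
\begin{itemize}
\item $II''=-1$ does not cancel ``zeroth-order'' terms. It cancels the $-2\Gamma(g)$ produced by $\Delta_\varphi(I(g)^2)$ against the $2\Gamma(g)$ in $c'\Gamma(g)$.
\item The reinforced bound is not ``equivalent'' to Kato's inequality. It follows from the Bochner identity \eqref{Bochner-1} with ${\mathscr Ric}_\varphi\ge kg$, combined with Kato's inequality.
\end{itemize}

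Your limiting steps are sound. Ergodicity can be taken directly from the paper's Poincar\'e theorem (Theorem \ref{thm-Poincare-CD}): $P_tf\to\int f\,d\mu$ in $L^2(\mu)$. Bounded convergence then gives $\int I(P_tf)\,d\mu\to I(\int f\,d\mu)$. Your treatment of $\bar A$ versus $A$ is also fine: if $\mu(\bar A)>\mu(A)$, then $\mu^+(A)=\infty$ and there is nothing to prove.

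Compare this with the paper's proofs of the neighbouring Poincar\'e and logarithmic Sobolev inequalities. Those integrate the dissipation of $\int\Phi(P_tf)\,d\mu$ over $t\in(0,\infty)$ and use the commutation $|\nabla P_tf|\le e^{-kt}P_t|\nabla f|$. Your argument instead interpolates locally along $s\mapsto P_s\bigl(\Phi_s(P_{t-s}f)\bigr)$ with the time-dependent weight $c(s)$.

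The ingredient is the same in substance. The reinforced bound $\Gamma(\Gamma(g))\le4\Gamma(g)(\Gamma_2(g)-k\Gamma(g))$ is exactly the infinitesimal form of the $L^1$-commutation listed in the paper's first theorem and used in its LSI proof. What the local interpolation buys is the sharp Gaussian profile $\sqrt k\,I$ itself. The cost is the pointwise $\Gamma_2$-computation and the regularisations you list: values in $[\delta,1-\delta]$ so that $\Psi\ge I(g)>0$, and the Lipschitz-to-smooth approximation.
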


\section{Nonlinear diffusion equations on smooth metric measure spaces}

In this last section we look more closely at (nonlinear) diffusion equations on smooth metric measure spaces. 
Here we confine to the porous medium and the fast diffusion equations (\cite{TV-PME-a}-\cite{TV-FDE-a}). 
\footnote{Diffusion equations have a close affinity with geometric and functional inequalities. 
One strong connection comes from the study of entropies and gradient flows in Wasserstein spaces. 
In a related paper in this volume (by V.~Vahidifar) the nonlinear heat equation $\square u(x,t) 
= \partial_t u (x,t) - \Delta_\varphi u (x,t) = \mathscr N(t,x,u(x,t))$ and their gradient estimates 
on smooth metric measure spaces are discussed.} 
For a (space-time) function $u=u(x,t)>0$ and fixed exponent $p>0$ this can be written in the form  
\begin{align} \label{eq11}
\square_p u(x,t) = \partial_t u (x,t) - \Delta_\varphi u^p (x,t) = \mathscr N(t,x,u(x,t)).   
\end{align}
Here ${\mathscr N}={\mathscr N}(t,x,u)$ is a sufficiently smooth nonlinearity depending on the space-time variable $(x,t)$ 
and the dependent variable (solution) $u$. The porous medium equation corresponds to the range $p>1$ and the fast 
diffusion equation to the range $0<p<1$.

We write ${\mathscr B}_R(x_0)$ for the closed geodesic ball in 
${\mathscr M}$ centered at $x_0$ with radius $R>0$. Likewise we write 
$Q_{R,T}(x_0)$ for the compact parabolic space-time cylinder with 
lower base ${\mathscr B}_R(x_0) \times \{t_0-T\}$ for $t_0 \in {\mathbb R}$ 
and height $T>0$, specifically, $Q_{R,T}(x_0)={\mathscr B}_{R}(x_0) \times [t_0-T, t_0] \subset 
{\mathscr M} \times (-\infty, \infty)$.
When the choice $(x_0, t_0)$ is clear from the context we write ${\mathscr B}_R$, $Q_{R, T}$. 
We write $X_+=\max(X, 0)$ and $X_-=\max(-X, 0)$.

\qquad \\
{\bf The nonlinear porous medium equation.} 
The first nonlinear diffusion equation we look at here is the one of slow diffusion type (or equivalently porous medium type). 
This is \eqref{eq11} with $p>1$. We also set 
\begin{align}\label{EQ-eq-2.4}
\Sigma(t,x,v)= p [(p-1)v/p ]^{\frac{p -2}{p-1}} 
\mathscr N \left(t,x,[(p-1)v/p]^{\frac{1}{p-1}}\right).
\end {align}

\begin{theorem} \label{thm1-EQ-PME-static}
Let $({\mathscr M}, g, d\mu)$ be a complete smooth metric measure space with $d\mu=e^{-\varphi} dv_g$ and assume 
${\mathscr Ric}_\varphi^m (g) \ge -(m-1)k g$ in ${\mathscr B}_R$ for some $k \ge 0$, $m \ge n$ 
and $R>0$. Let $u$ be a positive solution 
to \eqref{eq11} with $1 < p < 1+1/\sqrt{m-1}$ and $v =pu^{p-1}/(p-1)$ and $M= \sup_{Q_{R,T}} v$. 
Then there exists $C=C(p,m)>0$ such that for every $(x,t)$ in $Q_{R/2,T}$ with $t>t_0-T$ we have 
\begin{align}\label{eq-2.1-EQ-static}
v^{\frac{1}{2(p-1)}}|\nabla v| (x,t) 
\le C \left \{ \begin {array}{ll}
\left[ \dfrac{k^{1/4}}{\sqrt R} + \dfrac{1}{R} + \sqrt k \right] M^{1+\frac{1}{2(p-1)}}
\\
\\
+ \sup_{Q_{R, T}}\left\{v^{p/[2(p-1)]} \left[2 \Sigma_v(t,x,v)+\dfrac{\Sigma(t,x,v)}{(p-1)v} \right]_+^{1/2}\right\}
\\
\\
+ \dfrac{M^\frac{p}{2(p-1)}}{\sqrt {t-t_0+T}} 
+\sup_{Q_{R, T}} \left\{ \left[ v^{(2p+1)/[2(p-1)]} |\Sigma_x(t,x,v)| \right]^{1/3} \right\} 
\end{array}
\right\}.
\end{align}
\end{theorem}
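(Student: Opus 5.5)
Write $L=\Delta_\varphi$. The plan is a local Bernstein-type gradient estimate: derive a parabolic differential inequality for a suitable gradient quantity, then localise with a space-time cutoff and read off the bound at the maximum point.

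\textbf{Step 1: the pressure equation.} With $v = pu^{p-1}/(p-1)$ we have $u=[(p-1)v/p]^{1/(p-1)}$. Substituting into \eqref{eq11} gives the pressure equation
\begin{align*}
\partial_t v = (p-1) v \, L v + |\nabla v|^2 + \Sigma(t,x,v),
\end{align*}
with $\Sigma$ exactly as in \eqref{EQ-eq-2.4}. The exponent $(p-2)/(p-1)$ and the factor $p$ in $\Sigma$ are designed for this: $\partial_t v = p u^{p-2}\partial_t u$, and $\partial_t u$ is then expressed through $\Delta_\varphi u^p$ and $\mathscr N$.

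\textbf{Step 2: differential inequality for $F$.} The left-hand side of \eqref{eq-2.1-EQ-static} is $\sqrt{F}$, where
\[
F = v^{1/(p-1)} |\nabla v|^2 .
\]
Set $\mathcal L = \partial_t - (p-1) v L$. I compute $\mathcal L F$ using:
\begin{itemize}
\item the weighted Bochner formula \eqref{Bochner-2};
\item the curvature bound ${\mathscr Ric}^m_\varphi \ge -(m-1)k g$, which gives $\Gamma_2(v) \ge \frac1m (Lv)^2 - (m-1)k|\nabla v|^2$;
\item the time derivative of the pressure equation, which introduces the terms $\Sigma_v|\nabla v|^2$ and $\langle \nabla v, \Sigma_x\rangle$.
\end{itemize}
The weight $v^{1/(p-1)}$ is chosen so that the cross terms $\langle \nabla v, \nabla |\nabla v|^2\rangle$ are partly absorbed into $-\langle \nabla \cdot, \nabla F\rangle$ drift terms.

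The key algebraic step is to extract a good term $\ge c\, v^{-1-1/(p-1)} F^2$. To do this, replace $Lv$ using the equation, i.e. $(p-1)vLv = \partial_t v - |\nabla v|^2 - \Sigma$, and combine the term $\frac{2(p-1)}{m}v(Lv)^2$ with the cross terms via Cauchy--Schwarz. Positivity of the resulting quadratic form is where I expect $1<p<1+1/\sqrt{m-1}$ to enter, through $(p-1)^2(m-1)<1$. This yields, schematically,
\begin{align*}
\mathcal L F \le {}& -\frac{c}{M^{1+1/(p-1)}}F^2 + C\,\nabla F\text{-terms} + C k M F \\
& + C\Big[2\Sigma_v + \frac{\Sigma}{(p-1)v}\Big]_+ F + C v^{(2p+1)/(2(p-1))}|\Sigma_x|\,F^{1/2}.
\end{align*}
Here $v\le M$ is used to normalise the powers of $v$.

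\textbf{Step 3: localisation.} I take a cutoff $\psi(x,t)=\bar\psi(d(x,x_0))\,\eta(t)$ with:
\begin{itemize}
\item $\bar\psi$ supported in ${\mathscr B}_R$, equal to $1$ on ${\mathscr B}_{R/2}$, with $|\nabla\psi|^2/\psi\le C/R^2$ and $|\partial_t \psi|\le C\psi^{1/2}/(t-t_0+T)$;
\item the weighted Laplacian comparison under ${\mathscr Ric}^m_\varphi\ge -(m-1)kg$, giving $L d \le (m-1)\sqrt{k}\coth(\sqrt k d)$, hence $L\psi \ge -C(1/R^2+\sqrt k/R)\psi^{1/2}$.
\end{itemize}
Calabi's trick handles the cut locus. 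At a maximum point of $\psi F$ in $Q_{R,T}$ with $t>t_0-T$ we have $\nabla(\psi F)=0$, $L(\psi F)\le0$ and $\partial_t(\psi F)\ge0$. Multiplying the inequality by $\psi$ then gives a quadratic inequality
\[
(\psi F)^2 \le M^{\frac{p}{p-1}}\big(\cdots\big)\psi F + \cdots
\]
The coefficients $(p-1)vL\psi\lesssim M(\ldots)$ and $\partial_t\psi$ contribute the terms $[k^{1/4}R^{-1/2}+R^{-1}+\sqrt k]^2M^{2+1/(p-1)}$ and $M^{p/(p-1)}/(t-t_0+T)$. The $\Sigma_x$ term is handled by Young's inequality with exponents $4/3$ and $4$, which produces the cube root.

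Taking square roots and using $\psi=1$ on $Q_{R/2,T}$ yields \eqref{eq-2.1-EQ-static}. The matching of powers of $M$ in each term is a bookkeeping check.

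\textbf{Main obstacle.} The hard step is Step 2: choosing the weight exponent $1/(p-1)$ and handling the quadratic-form positivity so that a strictly positive coefficient of $F^2$ survives after absorbing the gradient cross terms, uniformly in $v\le M$. If the direct weight fails, I would first try the auxiliary function $F=|\nabla v|^2 v^{\beta}$ with $\beta$ left free. I would optimise $\beta$ against the Cauchy--Schwarz constant and expect the admissible range to be exactly $p<1+1/\sqrt{m-1}$.
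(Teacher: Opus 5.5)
The paper states this theorem without proof, so I am measuring your plan against the standard Souplet--Zhang-type argument it rests on. Much of your set-up is right: the pressure equation, the choice $F=v^{1/(p-1)}|\nabla v|^2$, and the localisation. This weight is exactly what produces the combination $2\Sigma_v+\Sigma/((p-1)v)$, and, after multiplying by $v^{p/(p-1)}$ and applying Young, the cube-root $\Sigma_x$ term.

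The gap is in Step~2, where the coercive $F^2$ term is supposed to come from. Write $q=p-1$, $G=|\nabla v|^2$, $X=vLv$. A direct computation gives
\[
\mathcal L F=-2q\,v^{\frac1q+1}\big(|\nabla\nabla v|^2+{\mathscr Ric}_\varphi(\nabla v,\nabla v)\big)+v^{\frac1q-1}\big(2qXG+G^2\big)+v^{\frac1q}G\Big[2\Sigma_v+\frac{\Sigma}{qv}\Big]+2v^{\frac1q}\langle\nabla v,\Sigma_x\rangle ,
\]
and for this weight the terms $\langle\nabla v,\nabla G\rangle$ cancel identically. Suppose you then use only $\Gamma_2(v)\ge (Lv)^2/m+{\mathscr Ric}^m_\varphi(\nabla v,\nabla v)$, as you list. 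The leading part becomes $v^{1/q-1}\big(-\frac{2q}{m}X^2+2qXG+G^2\big)$. At $X=0$ this equals $+v^{1/q-1}G^2$, so the form is indefinite and no Cauchy--Schwarz step can extract $-cG^2$ from it. Substituting $Lv$ from the equation does not help: it only trades $X$ for $\partial_t v$, which you do not control and which does not appear in the estimate. So the threshold $(p-1)^2(m-1)<1$ cannot come out of your Step~2. Your fallback of letting $\beta$ float cannot help either, because the stated left-hand side and the $\Sigma$-combination force $\beta=1/(p-1)$.

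What is missing is the refined Hessian inequality in a frame with $e_1=\nabla v/|\nabla v|$:
\[
|\nabla\nabla v|^2+{\mathscr Ric}_\varphi(\nabla v,\nabla v)\ge v_{11}^2+\frac{(Lv-v_{11})^2}{m-1}+{\mathscr Ric}^m_\varphi(\nabla v,\nabla v).
\]
It follows from $\sum_{j\ge2}v_{jj}^2\ge(\Delta v-v_{11})^2/(n-1)$, $\Delta v=Lv+\langle\nabla\varphi,\nabla v\rangle$, and $\frac{(a+b)^2}{n-1}+\frac{b^2}{m-n}\ge\frac{a^2}{m-1}$. You combine it with the identity $2v\,v_{11}G=v^{1-1/q}\langle\nabla F,\nabla v\rangle-G^2/q$, which recovers $v_{11}$ from $\nabla F$. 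Where $\nabla F=0$ this gives $v\,v_{11}=-G/(2q)$. The bracket multiplying $v^{1/q-1}$ is then, after maximising in $X$,
\[
-2q\Big[\frac{G^2}{4q^2}+\frac{(X+G/(2q))^2}{m-1}\Big]+2qXG+G^2\le-\frac{1-q^2(m-1)}{2q}\,G^2 .
\]
So the good term is $\frac{1-(p-1)^2(m-1)}{2(p-1)}\,v^{-p/(p-1)}F^2$, which is positive exactly when $p<1+1/\sqrt{m-1}$.

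At the maximum point of $\psi F$, the $\nabla F$-part of $v_{11}$ becomes $-F\nabla\psi/\psi$. This produces extra terms bounded by $C v^{1/q}G^{3/2}|\nabla\psi|/\psi$. Young's inequality against the good term turns them into a contribution of the form $C\,vF|\nabla\psi|^2/\psi\le CMF/R^2$, which feeds the $M^{(2p-1)/(2(p-1))}/R$ term. With this step supplied, the rest of your plan goes through as you describe: the curvature term $2q(m-1)k\,vF$, the $\Sigma$-terms, and the cut-off bookkeeping.
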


\begin{theorem} \label{ancient-2}
Let $({\mathscr M}, g, d\mu)$ be a complete smooth metric measure space with $d\mu=e^{-\varphi}dv_g$ and 
${\mathscr Ric}^m_\varphi(g) \ge 0$. Assume $(3-2p) {\mathscr N}(u) - 2u {\mathscr N}_u(u) \ge 0$ 
for all $u>0$ where $1< p < 1+1/\sqrt{m-1}$. Then any positive ancient solution to the nonlinear porous 
medium equation 
\begin{equation} \label{ancient-equation-PME}
\square_p u(x,t) = \partial_t u (x,t) - \Delta_\varphi u^p (x,t) = {\mathscr N}(u(x,t)), 
\end{equation}
satisfying the growth at infinity $u(x,t) = o([\varrho(x) + \sqrt{|t|}]^{2/(2p-1)})$, 
must be spatially constant. In particular, if additionally, ${\mathscr N}(u) \ge a$ for some $a>0$ and all 
$u>0$ then \eqref{ancient-equation-PME} admits no such ancient solutions. 
\end{theorem}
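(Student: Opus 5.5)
The plan is to apply the local gradient estimate of Theorem \ref{thm1-EQ-PME-static} with $k=0$ on larger and larger cylinders $Q_{R,R^2}$ (that is, $T=R^2$) centred at an arbitrary $(x_0,t_0)$, and then send $R\to\infty$. Ancient solutions exist for all $t\le t_0$, so these cylinders are admissible for every $R$. Since ${\mathscr N}={\mathscr N}(u)$ carries no explicit $(x,t)$ dependence, $\Sigma=\Sigma(v)$ by \eqref{EQ-eq-2.4}, so $\Sigma_x\equiv0$ and the last term in \eqref{eq-2.1-EQ-static} drops out. Also, with $k=0$, the curvature terms reduce to $1/R$.

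The first step is to handle the $\Sigma$-term. With $u=[(p-1)v/p]^{1/(p-1)}$ one computes
$$\Sigma_v=\frac{p-2}{p-1}\,\frac{\Sigma}{v}+p\,[(p-1)v/p]^{\frac{p-2}{p-1}}\,{\mathscr N}_u\,\frac{u}{(p-1)v}.$$
Hence
$$2\Sigma_v+\frac{\Sigma}{(p-1)v}=\frac{p\,[(p-1)v/p]^{\frac{p-2}{p-1}}}{(p-1)v}\,\big[(2p-3){\mathscr N}(u)+2u{\mathscr N}_u(u)\big].$$
The bracket is $\le 0$ precisely by the hypothesis $(3-2p){\mathscr N}-2u{\mathscr N}_u\ge0$, so the positive part vanishes and that term disappears.

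What remains is, at the point $(x_0,t_0)$ (which lies in $Q_{R/2,R^2}$ with $t_0>t_0-T$),
$$v^{\frac{1}{2(p-1)}}|\nabla v|(x_0,t_0)\le C\Big[\frac1R M^{\frac{2p-1}{2(p-1)}}+\frac1R M^{\frac{p}{2(p-1)}}\Big],\qquad M=\sup_{Q_{R,R^2}}v.$$
Now $v\sim u^{p-1}$. The growth hypothesis $u=o([\varrho+\sqrt{|t|}]^{2/(2p-1)})$ gives $\sup_{Q_{R,R^2}}u=o(R^{2/(2p-1)})$, because on $Q_{R,R^2}$ we have $\varrho(x)+\sqrt{|t|}\lesssim R$ for $R$ large (with $x_0,t_0$ fixed). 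Hence $M^{\frac{2p-1}{2(p-1)}}\sim (\sup u)^{(2p-1)/2}=o(R)$. The second term is of even lower order when $M$ is large. When $M$ stays bounded both terms are trivially $o(R)$, and since $p<2p-1$ the exponents compare favourably. Thus the right-hand side tends to $0$, giving $\nabla v(x_0,t_0)=0$. As $(x_0,t_0)$ was arbitrary, $u$ is spatially constant. Strictly, this requires the estimate to be applied with the $\sup$-norm factor computed on the cylinder; the main point to check is this exponent bookkeeping, which is exactly why the growth exponent is $2/(2p-1)$.

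For the final claim, spatial constancy reduces \eqref{ancient-equation-PME} to the ODE $u'(t)={\mathscr N}(u)\ge a>0$ for all $t\le t_0$. Integrating backwards gives $u(t)\le u(t_0)-a(t_0-t)$, which becomes negative for $t\ll0$. This contradicts positivity, so no such ancient solution exists. The expected main obstacle is the algebraic verification that the $\Sigma$-bracket matches the sign condition; the scaling step is routine once that is done.
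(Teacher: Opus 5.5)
Your proposal is correct and is essentially the intended route. The paper states this theorem without proof, as a consequence of the local estimate in Theorem~\ref{thm1-EQ-PME-static}, and your argument supplies it along those lines: take $k=0$ and $T=R^2$ at an arbitrary $(x_0,t_0)$; use $\Sigma_x\equiv 0$ and $2\Sigma_v+\Sigma/[(p-1)v]=[(2p-3)\mathscr N+2u\mathscr N_u]/u\le 0$ to remove the $\Sigma$-terms; then use $M^{(2p-1)/(2(p-1))}\sim(\sup_{Q_{R,R^2}}u)^{(2p-1)/2}=o(R)$ to conclude. The backward-in-time ODE argument for the case $\mathscr N\ge a>0$ is also fine.
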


\begin{theorem} \label{first-cor-last}
Let $(\mathscr M,g,d\mu)$ be a smooth metric measure space with $\mathscr M$ 
closed and $d\mu=e^{-\varphi} dv_g$ and let ${\mathscr Ric}^m_\varphi(g) \ge - (m-1) k g$ 
on ${\mathscr M}$ for some $k \ge 0$. 
Let $u$ be a positive smooth solution to \eqref{eq11} where 
$1<p \le 1+1/\sqrt{m-1}$ and set $v =pu^{p-1}/(p-1)$. Let $\Gamma=\Gamma(v)$ 
with $v>0$ be a given function of class $\mathscr{C}^2$ and assume the following 
conditions hold for some $a$: 
\begin{itemize}
\item $\Gamma'(v) \Sigma(t,x,v) \le 0$,
\item $\Gamma'(v) + v \Gamma''(v) \ge 0$,
\item $\langle \nabla v, \Sigma_x(t,x,v) \rangle \le 0$,  
\item $\Sigma_v(t,x,v) + \Sigma(t,x,v)/[2(p-1)v] \le a$.
\end{itemize}
Then for every $x \in \mathscr M$ and $0<t \le T$ we have the gradient bound
\begin{equation}
[v^\frac{1}{p-1} |\nabla v|^2] (x,t) 
\le e^{2(MK+a)t} \left\{ \max_{\mathscr M} 
\left[ v^\frac{1}{p-1} |\nabla v|^2 + \Gamma(v) \right]_{t=0} - \Gamma(v(x,t)) \right\},
\end{equation}
where $M=\sup v$ and $K=(p-1)(m-1) k$.
\end{theorem}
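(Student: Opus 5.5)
The plan is a maximum principle argument on the closed manifold for the auxiliary quantity
\begin{equation*}
F(x,t) = e^{-2(MK+a)t}\, v^{\frac{1}{p-1}} |\nabla v|^2 + \Gamma(v).
\end{equation*}
More precisely, I will first show that $H = v^{\frac{1}{p-1}}|\nabla v|^2 + \Gamma(v)$ satisfies a parabolic differential inequality up to the exponential weight. The estimate is then read off from $\max_{\mathscr M} F(\cdot,t) \le \max_{\mathscr M} F(\cdot,0)$.

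First I rewrite \eqref{eq11} in terms of the pressure $v = pu^{p-1}/(p-1)$. A direct computation gives
\begin{equation*}
\partial_t v = (p-1) v \Delta_\varphi v + |\nabla v|^2 + \Sigma(t,x,v),
\end{equation*}
with $\Sigma$ as in \eqref{EQ-eq-2.4}. I introduce the linearised operator
\begin{equation*}
\mathcal L = \partial_t - (p-1) v \Delta_\varphi - c\langle \nabla v, \nabla \cdot \rangle
\end{equation*}
for a suitable constant $c$ depending on $p$. Then I compute $\mathcal L(v^{\frac{1}{p-1}} |\nabla v|^2)$. The time derivative of $|\nabla v|^2$ produces $(p-1)v$ times $\langle \nabla v, \nabla \Delta_\varphi v\rangle$. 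I trade this for $\Gamma_2$ via the weighted Bochner formula, i.e. the inequality \eqref{Bochner-2} with ${\mathscr Ric}^m_\varphi \ge -(m-1)kg$. The curvature term contributes at most
\begin{equation*}
2(p-1)(m-1)k\, v |\nabla v|^2 \le 2MK |\nabla v|^2,
\end{equation*}
after multiplying through by the weight, and this is the source of the $e^{2MKt}$ factor. The forcing terms contribute $2\langle \nabla v, \Sigma_x\rangle + 2\Sigma_v |\nabla v|^2$, together with a term $\frac{1}{p-1}v^{-1}\Sigma\, v^{\frac{1}{p-1}}|\nabla v|^2$ coming from differentiating the weight $v^{\frac{1}{p-1}}$. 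Hypotheses three and four control these by $2a\, v^{\frac{1}{p-1}}|\nabla v|^2$.

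Next I compute $\mathcal L \Gamma(v)$. It equals $\Gamma'(v)\mathcal L v$ minus $(p-1)v\Gamma''(v)|\nabla v|^2$. Its terms are $\Gamma'(v)\Sigma$, which is $\le 0$ by hypothesis one, and a multiple of $-[\Gamma'(v) + v\Gamma''(v)]|\nabla v|^2$, which is $\le 0$ by hypothesis two, once $c$ is chosen so that the gradient terms combine in this form.

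Adding the two computations should give $\mathcal L F \le 0$, modulo the remaining terms. On a closed manifold there is no cutoff, so the weak maximum principle yields $F(x,t) \le \max_{\mathscr M} F(\cdot,0)$. This rearranges to the stated bound, using $e^{-2(MK+a)t} \le 1$ and multiplying by $e^{2(MK+a)t}$.

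The main obstacle is the pure gradient, Hessian and cubic terms in $\mathcal L(v^{\frac{1}{p-1}}|\nabla v|^2)$. These are terms like $\langle \nabla v, \nabla |\nabla v|^2\rangle$, $(\Delta_\varphi v)|\nabla v|^2$, $|\nabla v|^4/v$ and $v|\nabla\nabla v|^2$. They must be shown to have a good sign, or to be absorbable, after completing squares. I would first try to exploit the $\frac{1}{m}(\Delta_\varphi v)^2$ term from \eqref{Bochner-2}, together with Young's inequality, to absorb the cross terms $\Delta_\varphi v\,|\nabla v|^2$. The resulting quadratic form in $(v\Delta_\varphi v, |\nabla v|^2)$ should be nonnegative exactly when $(p-1)^2(m-1) \le 1$, which is why the range $1<p\le 1+1/\sqrt{m-1}$ appears. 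Terms proportional to $\langle \nabla v, \nabla H\rangle$ are absorbed into the drift coefficient $c$, and the exponent $\frac{1}{p-1}$ in the weight is chosen precisely to make this algebra close. I would verify that the discriminant condition is equivalent to this range; the endpoint $p = 1+1/\sqrt{m-1}$ is allowed because only a nonstrict inequality is needed here, since no cutoff errors must be absorbed.
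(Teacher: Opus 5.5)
Your architecture is right, and several pieces check out:
\begin{itemize}
\item the pressure equation $\partial_t v=(p-1)v\Delta_\varphi v+|\nabla v|^2+\Sigma$;
\item the curvature bound $2(p-1)(m-1)k\,v\,w\le 2MKw$ for $w=v^{\frac{1}{p-1}}|\nabla v|^2$;
\item the forcing bound $2v^{\frac{1}{p-1}}\langle\nabla v,\Sigma_x\rangle+2[\Sigma_v+\Sigma/(2(p-1)v)]w\le 2aw$;
\item $\mathcal L\Gamma(v)=\Gamma'\Sigma+(1-c)\Gamma'|\nabla v|^2-(p-1)v\Gamma''|\nabla v|^2$, which hypotheses one and two control exactly when $c=p$.
\end{itemize}
The final rearrangement does not need $e^{-2(MK+a)t}\le 1$, which fails if $MK+a<0$.

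The gap is the step you flag as the main obstacle. It is the entire content of the proof, and the route you propose for it cannot work. Put $q=p-1$, $G=|\nabla v|^2$, $v_{11}=\nabla\nabla v(\nabla v,\nabla v)/G$, so that $\langle\nabla v,\nabla G\rangle=2v_{11}G$. Let $\mathcal L=\partial_t-qv\Delta_\varphi-c\langle\nabla v,\nabla\,\cdot\,\rangle$. The weight $v^{1/q}$ makes two pairs of terms cancel:
\begin{itemize}
\item the $\langle\nabla v,\nabla G\rangle$ terms from $\partial_t$ and from $-qv\Delta_\varphi$;
\item the $G\Delta_\varphi v$ terms produced by differentiating the weight.
\end{itemize}
Using the exact identity \eqref{Bochner-1}, one is left with
\begin{multline*}
\mathcal L w = v^{\frac1q-1}\Big[2q\,(v\Delta_\varphi v)\,G-2qv^2\big(|\nabla\nabla v|^2+{\mathscr Ric}_\varphi(\nabla v,\nabla v)\big)\\
+\Big(1-\frac{c}{q}\Big)G^2-2c\,(v\,v_{11})\,G\Big]+2v^{\frac1q}\langle\nabla v,\Sigma_x\rangle+2\Big(\Sigma_v+\frac{\Sigma}{2qv}\Big)w .
\end{multline*}
Suppose all you keep from the Hessian is $\frac1m(\Delta_\varphi v)^2$ from \eqref{Bochner-2}. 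With $c=0$ the resulting upper bound for the bracket is $2qXG-\frac{2q}{m}X^2+G^2$, where $X=v\Delta_\varphi v$, and this is positive at $X=0$. Any drift that makes the $G^2$ coefficient negative, in particular the value $c=p$ forced by the $\Gamma$-part, brings in $-2c\,(vv_{11})G$. That term is linear in a Hessian component about which \eqref{Bochner-2} says nothing. So no quadratic form in $(v\Delta_\varphi v,|\nabla v|^2)$ alone can settle the sign. This is also why the threshold involves $m-1$ rather than the $m$ of \eqref{Bochner-2}.

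What is missing is the refined Bochner estimate. Split $\nabla\nabla v$ along $e_1=\nabla v/|\nabla v|$. Absorb $\langle\nabla\varphi,\nabla v\rangle^2/(m-n)$, the difference between ${\mathscr Ric}_\varphi$ and ${\mathscr Ric}^m_\varphi$, by Cauchy--Schwarz, and use the curvature bound:
\begin{equation*}
|\nabla\nabla v|^2+{\mathscr Ric}_\varphi(\nabla v,\nabla v)\ge v_{11}^2+\frac{(\Delta_\varphi v-v_{11})^2}{m-1}-(m-1)k\,|\nabla v|^2 .
\end{equation*}
Take $c=p$ and set $U=v\Delta_\varphi v-vv_{11}$. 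The bracket is then bounded by
\begin{equation*}
\Big[2qUG-\frac{2q}{m-1}U^2\Big]+\Big[-2q(vv_{11})^2-2(vv_{11})G\Big]-\frac1q G^2 .
\end{equation*}
Its maximum over $U$ and $vv_{11}$ is $\frac{(p-1)^2(m-1)-1}{2(p-1)}|\nabla v|^4$. This is $\le 0$ precisely for $1<p\le 1+1/\sqrt{m-1}$, endpoint included. It gives $\mathcal Lw\le 2(MK+a)w$, hence $\mathcal LF\le 0$, and your maximum-principle conclusion then goes through. At points where $\nabla v=0$ one checks directly that $\mathcal Lw=-2qv^{\frac1q+1}|\nabla\nabla v|^2\le 0$.
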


\begin{theorem} \label{second-cor-last}
Let $(\mathscr M,g,d\mu)$ be a smooth metric measure space with $\mathscr M$ 
closed and $d\mu=e^{-\varphi} dv_g$ and let ${\mathscr Ric}^m_\varphi(g) \ge - (m-1) k g$ 
on ${\mathscr M}$ for some $k \ge 0$.
Let $u$ be a positive smooth solution to \eqref{eq11} 
with $1<p \le 1+1/\sqrt{m-1}$ and set $v =pu^{p-1}/(p-1)$. 
Assume the following conditions hold:
\begin{itemize}
\item $\Sigma(t,x,v) \le 0$,
\item $\langle \nabla v, \Sigma_x(t,x,v) \rangle \le 0$, 
\item $\Sigma_v(t,x,v) + \Sigma(t,x,v)/[2(p-1)v] \le 0$. 
\end{itemize}
Then for every $x \in \mathscr M$ and $0<t \le T$ we have the gradient bound
\begin{equation}
\frac{p^2t}{1+2MKt} [v^\frac{1}{p-1} |\nabla v|^2](x,t) \le (p-1) 
\left[ \max_{\mathscr M} v^\frac{p}{p-1}(x,0) - v^\frac{p}{p-1}(x,t) \right],
\end{equation}
where $M=\sup v$ and $K=(p-1)(m-1) k$.
\end{theorem}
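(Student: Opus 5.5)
The plan is a Hamilton-type maximum principle argument on the closed manifold $\mathscr M$, carried out directly on the pressure $v$ rather than by specialising Theorem \ref{first-cor-last}. A first attempt would take $\Gamma(v)=c\,v^{p/(p-1)}$ in Theorem \ref{first-cor-last}: the hypotheses there hold with $a=0$, since $\Gamma'\ge 0$ and $\Sigma\le 0$. However, the resulting bound still carries the initial gradient term $\max[v^{1/(p-1)}|\nabla v|^2]_{t=0}$. The statement to be proved has no such term, so one needs a time weight that vanishes at $t=0$.

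\textbf{Step 1 (equation for $v$).} From \eqref{eq11} and $v=pu^{p-1}/(p-1)$, compute
\begin{equation*}
\partial_t v=(p-1)v\Delta_\varphi v+|\nabla v|^2+\Sigma(t,x,v),
\end{equation*}
which is exactly where the normalisation \eqref{EQ-eq-2.4} of $\Sigma$ comes from. Introduce the linearised operator $\mathcal L=\partial_t-(p-1)v\Delta_\varphi-\langle X,\nabla\,\cdot\,\rangle$, where the drift vector field $X$ (a combination of $\nabla v$) is to be chosen as the computation dictates.

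\textbf{Step 2 (evolution of $P$).} Set $P=v^{1/(p-1)}|\nabla v|^2$ and compute $\mathcal L P$. Use the weighted Bochner inequality \eqref{Bochner-2} with ${\mathscr Ric}^m_\varphi(g)\ge-(m-1)kg$ to control the Hessian term. The second order term $\tfrac1m(\Delta_\varphi v)^2$ is rewritten through the equation from Step 1. The curvature contributes $-2(p-1)(m-1)k\,v\,P\ge -2MKP$.

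The hypotheses $\langle\nabla v,\Sigma_x\rangle\le0$ and $\Sigma_v+\Sigma/[2(p-1)v]\le0$ are what make all the $\Sigma$-terms carry a favourable sign. The restriction $1<p\le 1+1/\sqrt{m-1}$ should make a certain quadratic form in $(\Delta_\varphi v,\ |\nabla v|^2/v)$ nonnegative. The outcome I aim for is
\begin{equation*}
\mathcal L P\le 2MKP-c\,\frac{P^2}{v^{p/(p-1)}}
\end{equation*}
on the set where this matters, with $c\ge 0$ (possibly $c=0$).

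\textbf{Step 3 (evolution of $w$).} Compute $\mathcal L w$ for $w=v^{p/(p-1)}$. The aim is an identity of the form
\begin{equation*}
\mathcal L w=-\frac{p^2}{(p-1)^2}\,P\cdot(\text{const})+\frac{p}{p-1}v^{1/(p-1)}\Sigma,
\end{equation*}
whose $\Sigma$ part is $\le 0$ because $\Sigma\le 0$. The constants should match so that $(p-1)w$ produces a term $-p^2P$.

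\textbf{Step 4 (maximum principle).} Form
\begin{equation*}
Q=\phi(t)P+(p-1)w,\qquad \phi(t)=\frac{p^2t}{1+2MKt}.
\end{equation*}
By Steps 2 and 3,
\begin{equation*}
\mathcal L Q\le(\phi'+2MK\phi-p^2)P.
\end{equation*}
A direct check gives
\begin{equation*}
\phi'+2MK\phi=p^2\,\frac{1+2MKt+4M^2K^2t^2}{(1+2MKt)^2}\le p^2,
\end{equation*}
so $\mathcal L Q\le 0$. Since $\mathscr M$ is closed, the weak maximum principle gives $Q(x,t)\le\max_{\mathscr M}Q(\cdot,0)=(p-1)\max_{\mathscr M}v^{p/(p-1)}(x,0)$, and this is exactly the claimed bound.

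\textbf{Main obstacle.} The hard part is the bookkeeping in Step 2. One must choose the drift $X$ so that the cross terms $\langle\nabla v,\nabla P\rangle$ are absorbed, and then check that the leftover quadratic expression has the right sign precisely when $p\le1+1/\sqrt{m-1}$. I would handle this by writing everything in terms of $y=|\nabla v|^2/v$ and $z=\Delta_\varphi v$, and completing the square, using the $\tfrac1m z^2$ term from \eqref{Bochner-2}. The discriminant condition for that square should reduce to $(p-1)^2(m-1)\le1$.
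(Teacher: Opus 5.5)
\textbf{There is a genuine gap, and it is in Step~2.} Your overall architecture is right. The identity in Step~1 is correct. With the drift $X=p\nabla v$, i.e.\ $\mathcal L=\partial_t-(p-1)v\Delta_\varphi-p\langle\nabla v,\nabla\,\cdot\,\rangle$, Steps~3 and~4 go through as you describe, since $\mathcal L\, v^{p/(p-1)}=\frac{p}{p-1}v^{1/(p-1)}\Sigma-\frac{p^2}{p-1}P$. Indeed, the hypotheses on $\Gamma$ in Theorem~\ref{first-cor-last} are exactly $\mathcal L\Gamma(v)\le 0$ for this operator. Your check $\phi'+2MK\phi\le p^2$ is also correct. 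Even the form you predict in Step~2 is right, with $c=\frac{1-(p-1)^2(m-1)}{2(p-1)}$. The problem is that the method you propose for Step~2, which carries the whole proof, cannot produce it.

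\textbf{Why your Step~2 fails.} The drift is not a free parameter chosen to absorb cross terms.
\begin{itemize}
\item $(\partial_t-(p-1)v\Delta_\varphi)v^{p/(p-1)}=\frac{p}{p-1}v^{1/(p-1)}\Sigma$ has no gradient term. So the $-p^2P$ must come from the drift, which forces $\langle X,\nabla v\rangle\ge p|\nabla v|^2$.
\item $(\partial_t-(p-1)v\Delta_\varphi)P$ contains no $\langle\nabla v,\nabla|\nabla v|^2\rangle$ term at all; it cancels exactly because of the weight $v^{1/(p-1)}$.
\item So the drift absorbs nothing. It creates the term $-p\langle\nabla v,\nabla P\rangle$, which involves $\eta=\nabla\nabla v(\nabla v,\nabla v)/|\nabla v|^2$.
\end{itemize}
Inequality \eqref{Bochner-2} gives only $\frac1m(\Delta_\varphi v)^2$ and carries no information on $\eta$. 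In your two variables the resulting term $-2p\,v^{1/(p-1)}|\nabla v|^2\eta$ therefore has nothing to control it. Even with no drift, write $h=|\nabla v|^2$ and $z=v\Delta_\varphi v$: the form $2(p-1)hz-\frac{2(p-1)}{m}z^2+h^2$ has maximum $(1+\frac{(p-1)m}{2})h^2>0$. A constant $\frac1m$ can also never produce the threshold $(p-1)^2(m-1)\le 1$. Finally, rewriting $\Delta_\varphi v$ through Step~1 only introduces $\partial_t v$, which has no place in a Hamilton-type bound.

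\textbf{The missing ingredient} is the refined Bochner inequality that splits off the direction of $\nabla v$. Where $\nabla v\neq 0$,
\[
|\nabla\nabla v|^2+{\mathscr Ric}_\varphi(\nabla v,\nabla v)\ge \eta^2+\frac{1}{m-1}\left(\Delta_\varphi v-\eta\right)^2+{\mathscr Ric}^m_\varphi(\nabla v,\nabla v).
\]
This follows from $|\nabla\nabla v|^2\ge\eta^2+\frac{1}{n-1}(\Delta v-\eta)^2$ together with $\frac{a^2}{n-1}+\frac{b^2}{m-n}\ge\frac{(a-b)^2}{m-1}$.

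With it, set $s=v\eta$ and discard the $\Sigma$-terms $2P[\Sigma_v+\Sigma/(2(p-1)v)]+2v^{1/(p-1)}\langle\nabla v,\Sigma_x\rangle\le 0$. One gets
\[
\mathcal LP\le 2MKP+v^{\frac{2-p}{p-1}}\Big[2(p-1)hz-2(p-1)s^2-\frac{2(p-1)}{m-1}(z-s)^2-\frac{h^2}{p-1}-2phs\Big].
\]
Maximizing first in $z$ and then in $s$ bounds the bracket by $\frac{(p-1)^2(m-1)-1}{2(p-1)}h^2$. This is exactly where the restriction $p\le 1+1/\sqrt{m-1}$ enters. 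Where $\nabla v=0$ one has directly $\mathcal LP=-2(p-1)v^{p/(p-1)}|\nabla\nabla v|^2\le 0$. With this in place of your Step~2, the rest of your argument closes.
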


\qquad \\
{\bf The nonlinear fast diffusion equation.} This is \eqref{eq11} with $0<p<1$. 
For technical reasons we introduce $p_0=p_0(m)>0$ as the larger of the values $1/2$ 
and $1-1/\sqrt{m-1}$ ($m \ge 2)$, that is, $p_0=1/2$ when $2 \le m \le 5$ and 
$p_0=1-1/\sqrt{m-1}$ when $m \ge 5$. Thus we always have $p_0(m) \ge 1/2$, and when $m \ge 4$ 
we have $p_0 \le p_c=1-2/m$. We also set 
\begin{align} \label{Sigma-Star-N-definition}
\Sigma^\star(t, x, v) = (p -1/2) v^{1-\frac{1}{p-1/2}} \mathscr N (t,x,v^{\frac{1}{p-1/2}}).
\end{align}

\begin{theorem} \label{thm-6.1-EQ-FDE-static}
Let $({\mathscr M}, g, d\mu)$ be a complete smooth metric measure space with $d\mu=e^{-\varphi} dv_g$ and  
${\mathscr Ric}_\varphi^m (g) \ge -(m-1)kg$ in ${\mathscr B}_R$ for some $k \ge 0$, $m \ge n$ and $R>0$. 
Let $u$ be a positive solution to \eqref{eq11} with $p_0< p < 1$, $v = u^{p-1/2}$ and 
$M= \sup_{Q_{R,T}} v$. Then there exists $C=C(p,m)>0$ such that for $(x,t)$ in 
$Q_{R/2,T}$ with $t>t_0-T$ we have 
\begin{align} \label{thm-6.1-EQ-FDE-static-equation}
|\nabla v|(x,t)
\le C \left \{ \begin {array}{ll}
\dfrac{M^\frac{p}{2p-1}}{\sqrt {t-t_0+T}} 
+\sup_{Q_{R, T}} \left\{v^{\frac{2p}{3(2p-1)}} |\Sigma^\star_x(t,x,v)|^{1/3}\right\} 
\\
\\
+ \left[ \dfrac{k^{1/4}}{\sqrt R} + \dfrac{1}{R} + \sqrt k \right] M
+ \sup_{Q_{R, T}}\left\{v^{\frac{p}{2p-1}} [\Sigma^\star_v(t,x,v)]_+^{1/2}\right\}
\end{array}
\right\}.
\end{align}
\end{theorem}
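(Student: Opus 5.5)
The plan is a local Bernstein-type maximum principle argument of Souplet--Zhang type, applied to the transformed equation for $v=u^{p-1/2}$. First I will derive the PDE satisfied by $v$. Writing $u=v^{1/(p-1/2)}$, so that $u^p=v^{p/(p-1/2)}=v^{2p/(2p-1)}$, a direct computation turns \eqref{eq11} into an equation of the form
\begin{equation*}
\partial_t v = c_1\, v^{\gamma}\Delta_\varphi v + c_2\, v^{\gamma-1}|\nabla v|^2 + \Sigma^\star(t,x,v),
\end{equation*}
where $\gamma=(2p-2)/(2p-1)$ and $c_1,c_2$ depend only on $p$. The factor $v^{1-1/(p-1/2)}$ in \eqref{Sigma-Star-N-definition} is exactly what absorbs the chain-rule factor from $\partial_t u$. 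This is the analogue of the pressure substitution used for Theorem \ref{thm1-EQ-PME-static}. The exponent $p-1/2$ is chosen so that the resulting quantity behaves well in the fast diffusion range $p_0<p<1$, and $p>1/2$ guarantees $v$ is an increasing power of $u$.

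Next I will set $w=|\nabla v|^2$ and compute $(\mathcal L-\partial_t)w$ for the linearised operator $\mathcal L=c_1v^\gamma\Delta_\varphi+\dots$. I will use the weighted Bochner formula \eqref{Bochner-2}, with the curvature bound ${\mathscr Ric}^m_\varphi\ge-(m-1)kg$, to extract the good term $\frac{1}{m}(\Delta_\varphi v)^2$. I then substitute $\Delta_\varphi v$ from the equation, which produces terms in $\partial_t v$, $|\nabla v|^2/v$ and $\Sigma^\star$. The aim is an inequality of the form
\begin{equation*}
\mathcal L w-\partial_t w \ge \delta\, v^{\gamma-2} w^2 - C k\, v^\gamma w - 2\,\Sigma^\star_v\, w - 2|\nabla v||\Sigma^\star_x| + (\text{terms in }\nabla w),
\end{equation*}
with $\delta=\delta(p,m)>0$. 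The main obstacle is producing $\delta>0$. The cross terms $\langle\nabla v,\nabla w\rangle v^{\gamma-1}$ and $w^2v^{\gamma-2}$ have to be combined by completing the square, and positivity of the resulting quadratic form in $(p,m)$ is precisely where the restriction $p>p_0=\max\{1/2,1-1/\sqrt{m-1}\}$ enters. I would first try the weighted Cauchy--Schwarz split that parallels the slow diffusion case, where the condition $p<1+1/\sqrt{m-1}$ appears symmetrically.

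Finally I will localise. I take a Li--Yau/Calabi cutoff $\psi(x,t)=\bar\psi(\varrho(x))\,\eta(t)$ supported in $Q_{R,T}$, with $\psi\equiv1$ on $Q_{R/2,T}$, $|\nabla\psi|^2/\psi\le C/R^2$, $\eta$ vanishing at $t_0-T$ with $|\eta'|/\sqrt\eta\le C/T$, and I control $\Delta_\varphi\psi$ via the weighted Laplacian comparison under ${\mathscr Ric}^m_\varphi\ge-(m-1)kg$. This comparison is what gives the $\sqrt k/R$ term. At a maximum point of $\psi w$, after handling the cut locus via Calabi's trick, I have $\nabla(\psi w)=0$, $\Delta_\varphi(\psi w)\le0$ and $\partial_t(\psi w)\ge0$. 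I substitute into the differential inequality and apply Young's inequality to each error term against $\delta v^{\gamma-2}\psi w^2$, using $v\le M$. The bounds then come out term by term. The cutoff terms give $M(1/R+k^{1/4}/\sqrt R)$, and the curvature term gives $\sqrt k M$. The time cutoff gives $M^{p/(2p-1)}/\sqrt{t-t_0+T}$, since $v^{1-\gamma}\le M^{1/(2p-1)}$. The $\Sigma^\star_v$ term gives $v^{p/(2p-1)}[\Sigma^\star_v]_+^{1/2}$. The $\Sigma^\star_x$ term is handled by Young's inequality with exponents $3$ and $3/2$, which produces the cube root. Evaluating at points where $\psi=1$ and taking square roots yields \eqref{thm-6.1-EQ-FDE-static-equation}.
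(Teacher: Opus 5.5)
The paper quotes this theorem without proof, so I am judging your plan on its own terms. Your reduction is right: $c_1=p$, $c_2=p/(2p-1)$, $\gamma=2(p-1)/(2p-1)$. The localisation bookkeeping also produces each term of the bound as you describe.

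\textbf{The gap.} It is exactly at the step you flag as the main obstacle: with \eqref{Bochner-2} you cannot get $\delta>0$ on the stated range. Write $X=\Delta_\varphi v$ and $w=|\nabla v|^2$. Your computation gives
\begin{align*}
p v^{\gamma}\Delta_\varphi w-\partial_t w &\ge \frac{2p}{m}v^{\gamma}X^2-2p\gamma v^{\gamma-1}wX+\frac{2p}{(2p-1)^2}v^{\gamma-2}w^2\\
&\quad -\frac{2p}{2p-1}v^{\gamma-1}\langle\nabla v,\nabla w\rangle-2p(m-1)k\,v^{\gamma}w-2\Sigma^\star_v w-2\langle\nabla v,\Sigma^\star_x\rangle .
\end{align*}
Keep $X$ as a free variable and minimise over it. Substituting $X$ from the equation only reintroduces $\partial_t v$ and the undifferentiated $\Sigma^\star$, which must then be minimised away again. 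What remains is $\frac{2p}{(2p-1)^2}\bigl[1-m(1-p)^2\bigr]v^{\gamma-2}w^2$. This is positive only for $p>1-1/\sqrt m$, so for every $m>4$ you lose the interval $(p_0,\,1-1/\sqrt m\,]$.

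The drift term cannot repair this. \eqref{Bochner-2} keeps no Hessian component to pair with $\langle\nabla v,\nabla w\rangle$, so there is no square to complete. At a maximum of $\psi w$ that term is just a cutoff error of size $v^{\gamma-1}w^{3/2}|\nabla\psi|$.

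\textbf{The missing ingredient.} You need the refined Bochner inequality, obtained by splitting the Hessian along $e_1=\nabla v/|\nabla v|$:
\begin{equation*}
|\nabla\nabla v|^2+{\mathscr Ric}_\varphi(\nabla v,\nabla v)\ge v_{11}^2+\frac{(\Delta_\varphi v-v_{11})^2}{m-1}+{\mathscr Ric}^m_\varphi(\nabla v,\nabla v),\qquad v_{11}=\frac{\langle\nabla v,\nabla w\rangle}{2w}.
\end{equation*}
It follows from $|\nabla\nabla v|^2\ge v_{11}^2+(\Delta v-v_{11})^2/(n-1)$ after absorbing $\langle\nabla\varphi,\nabla v\rangle^2/(m-n)$.

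Then proceed as follows:
\begin{enumerate}[label=(\roman*)]
\item Use $(X-v_{11})^2\ge(1-\epsilon)X^2-(\epsilon^{-1}-1)v_{11}^2$.
\item Treat $v_{11}$ as a cutoff error. At a maximum of $\psi w$ one has $\nabla w=-w\nabla\psi/\psi$, so $v_{11}^2\le w|\nabla\psi|^2/(4\psi^2)$.
\item After multiplying by $\psi$, this error is $\le Cv^{\gamma}w/R^2$. Against $\delta v^{\gamma-2}\psi w^2$ it only feeds the $M/R$ term.
\end{enumerate}
The surviving coefficient is $\frac{2p}{(2p-1)^2}\bigl[1-\frac{m-1}{1-\epsilon}(1-p)^2\bigr]$. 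For small $\epsilon$ this is positive precisely when $p>1-1/\sqrt{m-1}$.

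Do not instead combine $v_{11}$ with the drift term $\langle\nabla v,\nabla w\rangle=2wv_{11}$ by completing a square, as your plan suggests. With $v_{11}$ free, that only yields $p>1-2/m$, which for $m>4$ is more restrictive still.

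The same computation explains your exponent. For $v=u^\beta$ it requires $4(p-\beta)(1+\beta-p)>(m-1)(1-p)^2$, and the left side is maximal exactly at $\beta=p-1/2$. The porous-medium quantity in Theorem \ref{thm1-EQ-PME-static} is likewise a constant multiple of $|\nabla u^{p-1/2}|$, which is where $p<1+1/\sqrt{m-1}$ comes from.
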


\begin{theorem} \label{ancient-2}
Let $({\mathscr M}, g, d\mu)$ be a complete smooth metric measure space with $d\mu=e^{-\varphi}dv_g$ and 
${\mathscr Ric}^m_\varphi(g) \ge 0$. Assume $(3-2p) {\mathscr N}(u)/u - 2 {\mathscr N}_u(u) \ge 0$ 
for all $u>0$ where $p_0<p<1$. Then any positive ancient solution to the nonlinear 
fast diffusion equation 
\begin{equation} \label{ancient-equation-2}
\square_p u (x,t) = \partial_t u (x,t) - \Delta_\varphi u^p (x,t) = {\mathscr N}(u(x,t)), 
\end{equation}
satisfying the growth at infinity $u(x,t) = o([\varrho(x) + \sqrt{|t|}]^{2/p})$ 
must be spatially constant. If, in addition, ${\mathscr N}(u) \ge a$ for some $a>0$ and 
all $u>0$ then \eqref{ancient-equation-2} admits no such ancient solutions. 
\end{theorem}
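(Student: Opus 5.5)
The plan is to derive the statement from the local gradient estimate of Theorem \ref{thm-6.1-EQ-FDE-static}, applied on larger and larger space-time cylinders, in the same spirit as the porous medium Liouville result stated earlier.

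\textbf{Step 1: specialise Theorem \ref{thm-6.1-EQ-FDE-static}.} Set $v=u^{p-1/2}$. Since $p>p_0\ge 1/2$, the exponent $p-1/2$ is positive. Because ${\mathscr N}$ depends only on $u$, the function $\Sigma^\star$ of \eqref{Sigma-Star-N-definition} has no $x$-dependence, so $\Sigma^\star_x\equiv 0$. Next compute $\Sigma^\star_v$. Writing $q=p-1/2$ and $u=v^{1/q}$, we have $\Sigma^\star=q\,u^{q-1}{\mathscr N}(u)$ and $du/dv=u^{1-q}/q$. Differentiating gives
\begin{equation*}
\Sigma^\star_v=(q-1){\mathscr N}(u)/u+{\mathscr N}_u(u)=-\tfrac12\left[(3-2p){\mathscr N}(u)/u-2{\mathscr N}_u(u)\right].
\end{equation*}
The hypothesis therefore says precisely that $[\Sigma^\star_v]_+=0$. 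Since ${\mathscr Ric}^m_\varphi\ge0$, we may take $k=0$. The estimate \eqref{thm-6.1-EQ-FDE-static-equation} then collapses to
\begin{equation*}
|\nabla v|(x,t)\le C\left[\frac{1}{\sqrt{t-t_0+T}}\,M^{\frac{p}{2p-1}}+\frac1R\, M\right],\qquad M=\sup_{Q_{R,T}}v .
\end{equation*}

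\textbf{Step 2: scaling limit.} Fix $(x,t)$ and take $t_0=t$ and $T=R^2$, with the cylinder $Q_{R,R^2}$ centred at a fixed base point, say $x$ itself. On $Q_{R,R^2}$ we have $\varrho+\sqrt{|s|}\lesssim R$ once $R$ is large. The growth assumption $u=o([\varrho+\sqrt{|t|}]^{2/p})$ then gives $M=o(R^{(2p-1)/p})$, because $v=u^{p-1/2}$ and $(2/p)(p-1/2)=(2p-1)/p$. We need each term on the right to vanish. The first term is $R^{-1}M^{p/(2p-1)}=o(1)$, which works exactly because of the chosen exponent. The second term is $M/R$, which is $o(R^{(2p-1)/p-1})=o(R^{(p-1)/p})$; since $p<1$, this also tends to zero. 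Letting $R\to\infty$ gives $\nabla v(x,t)=0$. As $v$ is a positive power of $u$, $u$ is spatially constant.

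\textbf{Step 3: nonexistence.} Once $u=u(t)$ is spatially constant, equation \eqref{ancient-equation-2} reduces to the ODE $u'(t)={\mathscr N}(u)$. If ${\mathscr N}\ge a>0$, then $u(t)\le u(0)+at$ for $t<0$. This becomes negative for $t<-u(0)/a$, contradicting positivity of an ancient solution.

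\textbf{Main obstacle.} The delicate point is Step 2: matching the growth exponent $2/p$ with both terms of the estimate. It also requires care that the estimate applies on cylinders $Q_{R,R^2}$ whose upper time $t_0$ is the point being estimated, with the sup of $v$ on them controlled by the growth bound uniformly.
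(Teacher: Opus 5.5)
Your proof is correct. The paper states this theorem without proof (it is an extended abstract), but your argument is the natural derivation the statement is set up for: specialise Theorem \ref{thm-6.1-EQ-FDE-static} with $k=0$, $\Sigma^\star_x\equiv 0$ and $[\Sigma^\star_v]_+=0$ (your identity $\Sigma^\star_v=-\tfrac12[(3-2p){\mathscr N}/u-2{\mathscr N}_u]$ is right), take $T=R^2$, let $R\to\infty$ under the growth condition, and then integrate $u'={\mathscr N}(u)\ge a$ backwards in time. The point you flag about $\sup_{Q_{R,R^2}}v$ is handled by splitting the cylinder into the compact set where $\varrho+\sqrt{|s|}$ lies below a fixed threshold and its complement, where the little-$o$ bound applies.
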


\begin{theorem} \label{first-cor-last-last}
Let $(\mathscr M,g,d \mu)$ be a smooth metric measure space with $\mathscr M$ closed 
and $d\mu=e^{-\varphi} dv_g$ and let ${\mathscr Ric}^m_\varphi(g) \ge - (m-1) k g$ 
on ${\mathscr M}$ for some $k \ge 0$. 
Let $u$ be a positive smooth solution to \eqref{eq11} where $p_0<p<1$ and set $v = u^{p-1/2}$. 
Let $\Gamma=\Gamma(v)$ with $v>0$ be a given function of class $\mathscr{C}^2$ and assume 
the following conditions hold for some $a$: 
\begin{itemize}
\item $\Sigma^\star(t,x,v) \le a$, 
\item $\Gamma'(v) \Sigma^\star (t,x,v) \le 0$,
\item $\Gamma'(v) + v \Gamma''(v) \ge 0$,
\item $\langle \nabla v, \Sigma^\star_x(t,x,v) \rangle \le 0$.  
\end{itemize}
Then for all $x \in \mathscr M$ and $0<t \le T$ we have 
\begin{equation}
|\nabla v|^2 (x,t) 
\le e^{2(MK+a)t} \left\{ \max_{\mathscr M} 
\left[ |\nabla v|^2 + \Gamma(v) \right]_{t=0} - \Gamma(v(x,t)) \right\},
\end{equation}
where $M=\sup v$ and $K=(p-1)(m-1) k$.
\end{theorem}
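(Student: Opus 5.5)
The plan is a maximum-principle argument on the closed manifold $\mathscr M$, parallel to the one behind Theorem \ref{first-cor-last} for the porous medium range. The first step is to rewrite \eqref{eq11} in terms of $v=u^{p-1/2}$. Put $q=p-1/2>0$, so that $u=v^{1/q}$ and $u^p=v^{p/q}$. The chain rule gives
\begin{equation*}
\partial_t v = p\, v^{\frac{p-1}{q}} \Big[ \Delta_\varphi v + \Big(\frac{p}{q}-1\Big) \frac{|\nabla v|^2}{v} \Big] + \Sigma^\star(t,x,v),
\end{equation*}
with $\Sigma^\star$ as in \eqref{Sigma-Star-N-definition}; this is exactly why that normalisation was introduced. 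The operator I will use is the linearised one,
\begin{equation*}
\mathscr L w = p\, v^{\frac{p-1}{q}} \Delta_\varphi w + (\text{first-order drift in } \nabla v)\cdot \nabla w .
\end{equation*}

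The second step is to compute $(\partial_t-\mathscr L)|\nabla v|^2$. Differentiating the $v$-equation and pairing with $\nabla v$ produces the following terms:
\begin{itemize}
\item the weighted Bochner term $-2p\,v^{(p-1)/q}\big[|\nabla\nabla v|^2+\mathscr Ric_\varphi(\nabla v,\nabla v)\big]$;
\item cross terms of the form $v^{(p-1)/q-1}\langle\nabla v,\nabla|\nabla v|^2\rangle$, $v^{(p-1)/q-1}|\nabla v|^2\Delta_\varphi v$ and $v^{(p-1)/q-2}|\nabla v|^4$;
\item the source terms $2\Sigma^\star_v|\nabla v|^2+2\langle\nabla v,\Sigma^\star_x\rangle$.
\end{itemize}
Using the inequality \eqref{Bochner-2}, i.e. $|\nabla\nabla v|^2+\mathscr Ric_\varphi\ge (\Delta_\varphi v)^2/m+\mathscr Ric^m_\varphi$ in its tensorial form, together with $\mathscr Ric^m_\varphi\ge-(m-1)kg$, I will absorb all the cross terms into a quadratic form in $(\Delta_\varphi v,\ |\nabla v|^2/v,\ \text{the Hessian component along }\nabla v)$. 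I expect the main obstacle to be showing that this quadratic form has a good sign. This is precisely where $p_0<p<1$ enters: the thresholds $1/2$ and $1-1/\sqrt{m-1}$ should come out as the condition that the discriminant be nonpositive, as in the porous medium case with $1<p\le 1+1/\sqrt{m-1}$. I would first try completing the square in $\Delta_\varphi v$ and in $\langle\nabla v,\nabla|\nabla v|^2\rangle$, the latter modulo $\mathscr L$-gradient terms, which vanish at a maximum point. The outcome I aim for is
\begin{equation*}
(\partial_t-\mathscr L)|\nabla v|^2 \le 2(MK+\Sigma^\star_v)|\nabla v|^2 + 2\langle \nabla v,\Sigma^\star_x\rangle + (\text{terms}\ \le 0),
\end{equation*}
where the curvature contribution is bounded using $v\le M$ and $K=(p-1)(m-1)k$. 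The hypotheses $\Sigma^\star\le a$ and $\langle\nabla v,\Sigma^\star_x\rangle\le0$ are then to be used to replace the $\Sigma^\star_v$ coefficient by $a$; this may require pairing with the $\Gamma$ term below rather than holding on its own.

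The third step handles $\Gamma(v)$. From the $v$-equation, $(\partial_t-\mathscr L)\Gamma(v)$ equals $\Gamma'(v)\Sigma^\star$, which is $\le0$ by hypothesis, minus a multiple of $\big(\Gamma'(v)+v\Gamma''(v)\big)v^{(p-1)/q-1}|\nabla v|^2$, which is $\le0$ by the convexity-type hypothesis. Now set
\begin{equation*}
G = e^{-2(MK+a)t}|\nabla v|^2 + \Gamma(v).
\end{equation*}
Combining the second and third steps gives $(\partial_t-\mathscr L)G\le0$ at any point where the gradient terms are harmless, in particular at a spatial maximum. Since $\mathscr M$ is closed, the weak maximum principle yields $G(x,t)\le\max_{\mathscr M}G(\cdot,0)=\max_{\mathscr M}\big[|\nabla v|^2+\Gamma(v)\big]_{t=0}$. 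Rearranging this inequality gives the stated bound.
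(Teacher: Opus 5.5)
The paper states this result without proof, so I am checking your plan against the statement itself. It breaks at the curvature step of Step 2.

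Your own $v$-equation has diffusion coefficient $p\,v^{(p-1)/q}$ with $(p-1)/q=2(p-1)/(2p-1)<0$. After \eqref{Bochner-2} the curvature contribution is therefore
\[
-2p\,v^{\frac{p-1}{q}}\,{\mathscr Ric}^m_\varphi(\nabla v,\nabla v)\le 2p(m-1)k\,v^{\frac{p-1}{q}}|\nabla v|^2 .
\]
This term is nonnegative and carries a \emph{negative} power of $v$. The bound $v\le M$ gives no upper control on it. It also cannot be dominated by $2MK|\nabla v|^2$, which is $\le 0$ because $K=(p-1)(m-1)k\le 0$ for $p<1$. In Theorem \ref{first-cor-last} the coefficient is $(p-1)v$, a positive multiple of $v$; that is why $\sup v$ and $K$ appear there, and it does not carry over to this setting.

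In fact no argument can produce the printed exponent, because the estimate as printed is false for $k>0$. The hypothesis ${\mathscr Ric}^m_\varphi(g)\ge-(m-1)kg$ gets weaker as $k$ grows, while $e^{2MKt}$ gets smaller, and $M$ does not depend on $k$. Take a flat torus with $\varphi\equiv0$, ${\mathscr N}\equiv0$, $\Gamma\equiv0$ and $a=0$. Every hypothesis then holds for every $k\ge0$. Letting $k\to\infty$ would force $\nabla v(\cdot,t)\equiv0$ for all $t>0$, which is absurd for nonconstant positive initial data. What your scheme actually yields is the exponent $2(\bar K+a)t$ with $\bar K=p(m-1)k\,\big(\min_{\mathscr M\times[0,T]}v\big)^{2(p-1)/(2p-1)}$.

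The second gap is the reaction term. In $(\partial_t-\mathscr L)|\nabla v|^2$ the nonlinearity enters only through $2\Sigma^\star_v|\nabla v|^2+2\langle\nabla v,\Sigma^\star_x\rangle$. Meanwhile $(\partial_t-\mathscr L)\Gamma(v)$ contributes only $\Gamma'(v)\Sigma^\star$. Since $\Gamma\equiv0$ is admissible, there is nothing to pair with, and an upper bound on $\Sigma^\star$ says nothing about $\Sigma^\star_v$. The hypothesis that closes the argument is $\Sigma^\star_v\le a$; compare $\Sigma^\star_v\le 0$ in Theorem \ref{second-cor-last-last}.

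With these two corrections the rest of your plan goes through, and the obstacle you flagged resolves as follows. Fix the drift in $\mathscr L$ as $\frac{2p^2}{2p-1}v^{(p-1)/q-1}\langle\nabla v,\nabla w\rangle$.
\begin{itemize}
\item Step 3 then holds exactly: $(\partial_t-\mathscr L)\Gamma(v)=\Gamma'\Sigma^\star-p\,v^{(p-1)/q-1}(\Gamma'+v\Gamma'')|\nabla v|^2$.
\item In Step 2 a leftover term $\frac{4p(1-p)}{2p-1}v^{(p-1)/q-1}\nabla\nabla v(\nabla v,\nabla v)$ remains. You cannot discard it as a gradient term: at a maximum of $G$ one only has $e^{-2(\bar K+a)t}\nabla|\nabla v|^2=-\Gamma'(v)\nabla v$.
\item Instead, set $Z=\nabla\nabla v(\nabla v,\nabla v)/|\nabla v|^2$ and use the refined inequality $|\nabla\nabla v|^2+{\mathscr Ric}_\varphi(\nabla v,\nabla v)\ge Z^2+(\Delta_\varphi v-Z)^2/(m-1)+{\mathscr Ric}^m_\varphi(\nabla v,\nabla v)$.
\item The leftover term combines with this into $-Z^2$ plus a quadratic form in $\Delta_\varphi v-Z$ and $|\nabla v|^2/v$. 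That form is nonpositive exactly when $(1-p)^2\le 1/(m-1)$.
\end{itemize}
So the threshold $1-1/\sqrt{m-1}$ comes from this discriminant. The threshold $1/2$ comes only from needing $q>0$, not from a discriminant as you expected.
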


\begin{theorem} \label{second-cor-last-last}
Let $(\mathscr M,g,d\mu)$ be a smooth metric measure space with $\mathscr M$ closed 
and $d\mu=e^{-\varphi} dv_g$ and let ${\mathscr Ric}^m_\varphi(g) \ge - (m-1) k g$ 
on ${\mathscr M}$ for some $k \ge 0$. Let $u$ be a positive smooth solution to \eqref{eq11} 
where $p_0<p <1$ and set $v =u^{p-1/2}$. Assume the following conditions hold: 
\begin{itemize}
\item $\Sigma^\star(t,x,v) \le 0$,
\item $\Sigma^\star_v(t,x,v) \le 0$,
\item $\langle \nabla v, \Sigma^\star_x(t,x,v) \rangle \le 0$. 
\end{itemize}
Then for $x \in \mathscr M$ and $0<t \le T$ we have 
\begin{equation}
\frac{t |\nabla v(x,t)|^2}{1+2MKt} \le \frac{(2p-1)^2}{8p^3} \left[ \max_{\mathscr M} v^\frac{2p}{2p-1}(x,0) - v^\frac{2p}{2p-1}(x,t) \right], 
\end{equation}
where $M=\sup v$ and $K=(p-1)(m-1) k$.
\end{theorem}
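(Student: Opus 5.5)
The plan is a parabolic maximum-principle argument of Hamilton type, carried out for the quantity $v=u^{p-1/2}$ in the same spirit as Theorem \ref{first-cor-last-last}. First I rewrite \eqref{eq11} in terms of $v$. Since $u=v^{2/(2p-1)}$ and $u^p=v^{2p/(2p-1)}$, a direct chain-rule computation gives
\begin{equation*}
\partial_t v = p\, v^{\frac{2p-2}{2p-1}} \left[ \Delta_\varphi v + \frac{|\nabla v|^2}{(2p-1)v} \right] + \Sigma^\star(t,x,v),
\end{equation*}
with $\Sigma^\star$ as in \eqref{Sigma-Star-N-definition}. I then introduce the linearised operator
\begin{equation*}
\mathcal L w = p v^{\frac{2p-2}{2p-1}} \Delta_\varphi w + \langle \mathbf b, \nabla w\rangle,
\end{equation*}
where the drift $\mathbf b$ is a multiple of $v^{\frac{2p-2}{2p-1}-1}\nabla v$. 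All subsequent computations are done for $\partial_t-\mathcal L$.

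\textbf{Step 1 (gradient term).} I compute $(\partial_t-\mathcal L)|\nabla v|^2$ by differentiating the $v$-equation and applying the weighted Bochner inequality \eqref{Bochner-2} with ${\mathscr Ric}^m_\varphi(g)\ge-(m-1)kg$. This produces three kinds of terms. The first is a good negative term $-c|\nabla\nabla v|^2$ (or $-\tfrac{c}{m}(\Delta_\varphi v)^2$). The second is the curvature term, bounded using $v\le M$ by $2MK|\nabla v|^2$ up to the normalisation fixed by $K$. The third collects the mixed terms $v^{-1}\langle\nabla v,\nabla|\nabla v|^2\rangle$ and $|\nabla v|^4/v^2$ coming from the nonlinear coefficient, together with the contributions $2\Sigma^\star_v|\nabla v|^2+2\langle\nabla v,\Sigma^\star_x\rangle$. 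By the hypotheses $\Sigma^\star_v\le0$ and $\langle\nabla v,\Sigma^\star_x\rangle\le0$, the last two contributions are dropped. The aim of this step is
\begin{equation*}
(\partial_t-\mathcal L)|\nabla v|^2 \le 2MK|\nabla v|^2 .
\end{equation*}

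\textbf{Step 2 (potential term).} With $q=2p/(2p-1)$, I compute $(\partial_t-\mathcal L)v^q$ from the $v$-equation. The choice of $q$ is made so that $v^q=u^p$, which should make the second-order and drift terms combine into a clean negative multiple of $|\nabla v|^2$:
\begin{equation*}
(\partial_t-\mathcal L)v^q \le -\gamma|\nabla v|^2 + q v^{q-1}\Sigma^\star \le -\gamma|\nabla v|^2 .
\end{equation*}
The last inequality uses $\Sigma^\star\le0$. I expect the constant to come out as $\gamma = 8p^3/(2p-1)^2$, which is exactly what the constant in the statement requires.

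\textbf{Step 3 (maximum principle).} Set $\phi(t)=t/(1+2MKt)$ and
\begin{equation*}
F=\phi(t)|\nabla v|^2+\gamma^{-1}v^q .
\end{equation*}
Combining Steps 1 and 2,
\begin{equation*}
(\partial_t-\mathcal L)F\le \big(\phi'+2MK\phi-1\big)|\nabla v|^2 .
\end{equation*}
With $s=2MKt\ge 0$,
\begin{equation*}
\phi'+2MK\phi=\frac{1}{(1+s)^2}+\frac{s}{1+s}\le 1 ,
\end{equation*}
so $F$ is a subsolution. Since $\mathscr M$ is closed, the weak maximum principle gives $F(x,t)\le\max_{\mathscr M}F(\cdot,0)=\gamma^{-1}\max_{\mathscr M} v^q(\cdot,0)$. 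Moving $\gamma^{-1}v^q(x,t)$ to the right-hand side yields the stated bound with constant $(2p-1)^2/(8p^3)$.

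\textbf{Main obstacle.} The hard part is Step 1, namely absorbing the mixed terms $v^{-1}\langle\nabla v,\nabla|\nabla v|^2\rangle$ and $|\nabla v|^4/v^2$. My first attempt would be to expand $\nabla|\nabla v|^2=2\nabla\nabla v(\nabla v,\cdot)$ and use Young's inequality together with the dimensional term $\tfrac1m(\Delta_\varphi v)^2$ from ${\rm CD}(-(m-1)k,m)$. The resulting quadratic form in $(|\nabla\nabla v|,|\nabla v|^2/v)$ should be nonpositive exactly when $p>p_0=\max\{1/2,1-1/\sqrt{m-1}\}$, which is where that hypothesis enters. If a direct absorption fails, the fallback is to evaluate at a maximum point of $F$, where $\nabla F=0$ converts $\nabla|\nabla v|^2$ into a multiple of $v^{q-1}\nabla v$, and to close the argument there.
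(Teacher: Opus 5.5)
\textbf{The drift fixes $\gamma$, and Steps 1 and 2 conflict.} Your $v$-equation is correct, and a maximum principle for $F=\phi(t)|\nabla v|^2+\gamma^{-1}v^{q}$ is a reasonable scheme. The problem is that the drift you leave unspecified determines $\gamma$ exactly. Put $\alpha=\frac{2p-2}{2p-1}<0$, $q=\frac{2p}{2p-1}$ (so $q+\alpha=2$), and $\mathbf b=\beta v^{\alpha-1}\nabla v$. Since $v^q=u^p$ and $pv^{\alpha}=pu^{p-1}$, one has the identity $(\partial_t-\mathcal L)v^q=-\beta q|\nabla v|^2+qv^{q-1}\Sigma^\star$. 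Hence $\gamma=8p^3/(2p-1)^2$ forces $\beta=4p^2/(2p-1)$.

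\textbf{What Step 1 actually requires.} The term $\tfrac1m(\Delta_\varphi v)^2$ alone cannot absorb the $\nabla\nabla v(\nabla v,\nabla v)$ term produced by a non-natural drift; even with the natural drift it only reaches $p\ge 1-1/\sqrt m$. You need the split
\[
|\nabla\nabla v|^2+\mathscr{Ric}_\varphi(\nabla v,\nabla v)\ge\lambda^2+\frac{(\Delta_\varphi v-\lambda)^2}{m-1}+\mathscr{Ric}^m_\varphi(\nabla v,\nabla v),\qquad \lambda=\nabla\nabla v(e,e),\quad e=\nabla v/|\nabla v|.
\]
After dividing by $pv^{\alpha}$, the Hessian terms become
\[
-2\lambda^2+2(\alpha-\eta)s\lambda-\frac{2\mu^2}{m-1}+2\alpha s\mu-\frac{2s^2}{(2p-1)^2},\qquad s=\frac{|\nabla v|^2}{v},\quad \mu=\Delta_\varphi v-\lambda,\quad \eta=\frac1p\left(\beta-\frac{2p}{2p-1}\right).
\]
This is nonpositive for all $\lambda,\mu$ if and only if $(2p-1)^2(\alpha-\eta)^2\le 4\,[1-(m-1)(1-p)^2]$.

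\textbf{Consequence for the constant.} Your $\beta$ corresponds to $\eta=2$, and then the condition reads $p\ge 1-2/m$, not $p>p_0$. Conversely, when $m\ge5$ and $p\downarrow 1-1/\sqrt{m-1}$, the admissible drifts shrink to $\eta=\alpha$. That gives $\beta=2p^2/(2p-1)$ and $\gamma=4p^3/(2p-1)^2$, half of what you need.

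\textbf{The fallback does not help.} At a critical point of $F$ the drift multiplies $\nabla F=0$ and drops out. Inserting the constraint $\nabla\nabla v(e,\cdot)=\lambda e$ coming from $\nabla F=0$ and optimising over $\lambda$ reproduces exactly the best-drift threshold $\gamma^{-1}>(2p-1)^2/[4p^2(p+\sqrt{1-(m-1)(1-p)^2})]$. So your scheme proves the stated constant only for $p\ge 1-2/m$, which covers all of $(p_0,1)$ only when $m\le4$. On $(p_0,1-2/m)$ it gives a strictly larger constant, and uniformly in $m$ only $(2p-1)^2/(4p^3)$. Reaching $(2p-1)^2/(8p^3)$ there needs an ingredient beyond this maximum principle.

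\textbf{Curvature term.} The curvature bookkeeping also fails as written. The curvature contribution to $(\partial_t-\mathcal L)|\nabla v|^2$ is $-2pv^{\alpha}\mathscr{Ric}^m_\varphi(\nabla v,\nabla v)\le 2p(m-1)k\,v^{\alpha}|\nabla v|^2$ with $\alpha<0$. It is therefore controlled by $\inf v$ (equivalently by $\sup u^{p-1}$), not by $v\le M$. Moreover $K=(p-1)(m-1)k\le 0$ for $p<1$, so $2MK|\nabla v|^2\le 0$ cannot dominate this nonnegative term. Your assumption $s=2MKt\ge 0$ in Step 3 is also false: for $-1<s<0$ one gets $\phi'+2MK\phi=(1+s+s^2)/(1+s)^2>1$. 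For $k>0$, Steps 1 and 3 thus fail with the normalisation as stated. What the computation actually supports is $\phi=t/(1+2\tilde K t)$ with $\tilde K=p(m-1)k\sup_{\mathscr M\times[0,T]}v^{\alpha}\ge 0$.
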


\qquad \\
{\bf Acknowledgement.} I thanks Prof Michael Ruzhansky for the opportunity 
to visit the Ghent Analysis and PDE centre during Summer 2024. I also wish to thank the organisers of the 
workshop ``Inverse Problems and Application" for organising such a nice, timely and stimulating meeting. 
Support from the Engineering and Physical Sciences Research Council (EPSRC) through the grant 
EP/V027115/1 is gratefully acknowledged.

\end{document}